
\documentclass[11pt,reqno,tbtags,a4paper]{amsart}
\usepackage{amssymb}
\usepackage{mathabx}  
\usepackage{xpunctuate}
\usepackage{url}
\usepackage[square,numbers]{natbib}
\bibpunct[, ]{[}{]}{;}{n}{,}{,}
\usepackage{epic}

\title[Fringe trees of full binary trees]
{Fringe trees of Patricia tries, compressed binary search trees, 
and three other random full binary trees}

\date{2 May, 2024; revised 30 January, 2026}

\author{Svante Janson}
\thanks{Supported by the Knut and Alice Wallenberg Foundation
and
the Swedish Research Council
}
\address{Department of Mathematics, Uppsala University, PO Box 480,
SE-751~06 Uppsala, Sweden}
\email{svante.janson@math.uu.se}



\subjclass[2020]{60C05, 05C05, 60F05} 

\overfullrule 0pt 


\numberwithin{equation}{section}

\allowdisplaybreaks

 \setlength{\textwidth}{400pt} 
 \setlength{\oddsidemargin}{12mm} 
 \setlength{\evensidemargin}{12mm}  

\marginparwidth=80pt 






\theoremstyle{plain}
\newtheorem{theorem}{Theorem}[section]
\newtheorem{lemma}[theorem]{Lemma}

\newtheorem{corollary}[theorem]{Corollary}
{Claim}

\theoremstyle{definition}

\newcommand\xqed[1]{%
    \leavevmode\unskip\penalty9999 \hbox{}\nobreak\hfill
    \quad\hbox{#1}}

\newtheorem{exampleqqq}[theorem]{Example}
\newenvironment{example}{\begin{exampleqqq}}
  {\xqed{$\triangle$}\end{exampleqqq}}

\newenvironment{examplet}[1]
  {\begin{example}{\ensuremath{\mathbf{#1}}\,\textbf:}}
  {\end{example}}

\newtheorem{remarkqqq}[theorem]{Remark}
\newenvironment{remark}{\begin{remarkqqq}}
  {\xqed{$\triangle$}\end{remarkqqq}}


\newtheorem{problem}[theorem]{Problem}

\theoremstyle{remark}


\newcounter{dummy}
\makeatletter
\newcommand\myitem[1][]{\item[#1]\refstepcounter{dummy}\def\@currentlabel{#1}}
\makeatother

\newenvironment{romenumerate}[1][-10pt]{
\addtolength{\leftmargini}{#1}\begin{enumerate}
 }{\end{enumerate}}

\newenvironment{rommenumerate}[1][-10pt]{
\addtolength{\leftmargini}{#1}\begin{enumerate}
 }{\end{enumerate}}

\newenvironment{alphenumerate}[1][-10pt]{
\addtolength{\leftmargini}{#1}\begin{enumerate}
 }{\end{enumerate}}

\newcounter{oldenumi}
{\setcounter{oldenumi}{\value{enumi}}
\begin{romenumerate} \setcounter{enumi}{\value{oldenumi}}}
{\end{romenumerate}}

\newcounter{thmenumerate}

\newcounter{xenumerate}   


\newcommand\pfitemx[1]{\par#1:}
\newcommand\pfitemref[1]{\pfitemx{\ref{#1}}}


\newcommand{\refT}[1]{Theorem~\ref{#1}}

\newcommand{\refC}[1]{Corollary~\ref{#1}}
\newcommand{\refCs}[1]{Corollaries~\ref{#1}}
\newcommand{\refL}[1]{Lemma~\ref{#1}}
\newcommand{\refLs}[1]{Lemmas~\ref{#1}}
\newcommand{\refR}[1]{Remark~\ref{#1}}

\newcommand{\refS}[1]{Section~\ref{#1}}
\newcommand{\refSs}[1]{Sections~\ref{#1}}
\newcommand{\refSS}[1]{Section~\ref{#1}}
\newcommand{\refSSS}[1]{Section~\ref{#1}}

\newcommand{\refF}[1]{Figure~\ref{#1}}




\begingroup
  \count255=\time
  \divide\count255 by 60
  \count1=\count255
  \multiply\count255 by -60
  \advance\count255 by \time
  \ifnum \count255 < 10 \xdef\klockan{\the\count1.0\the\count255}
  \else\xdef\klockan{\the\count1.\the\count255}\fi
\endgroup




\newcommand{\sumko}{\sum_{k=0}^\infty}

\newcommand{\sumk}{\sum_{k=1}^\infty}
\newcommand{\suml}{\sum_{\ell=1}^\infty}

\newcommand\set[1]{\ensuremath{\{#1\}}}

\newcommand\xpar[1]{(#1)}
\newcommand\bigpar[1]{\bigl(#1\bigr)}
\newcommand\Bigpar[1]{\Bigl(#1\Bigr)}

\newcommand\bigsqpar[1]{\bigl[#1\bigr]}

\newcommand\Bigsqpar[1]{\Bigl[#1\Bigr]}

\newcommand\cpar[1]{\{#1\}}

\newcommand\abs[1]{\lvert#1\rvert}
\newcommand\bigabs[1]{\bigl\lvert#1\bigr\rvert}

\newcommand\lrabs[1]{\left\lvert#1\right\rvert}
\def\rompar(#1){\textup(#1\textup)}    

\def\xexp(#1){e^{#1}}

\newcommand\floor[1]{\lfloor#1\rfloor}

\newcommand\frax[1]{\{#1\}}

\newcommand\ntoo{\ensuremath{{n\to\infty}}}

\newcommand\mtoo{\ensuremath{{m\to\infty}}}

\newcommand\punkt{\xperiod}    
    
\newcommand\ie{i.e\punkt}
\newcommand\eg{e.g\punkt}

\newcommand\cf{cf\punkt}


\newcommand\ii{\mathrm{i}}

\newcommand{\tend}{\longrightarrow}
\newcommand\dto{\overset{\mathrm{d}}{\tend}}
\newcommand\pto{\overset{\mathrm{p}}{\tend}}

\newcommand\eqd{\overset{\mathrm{d}}{=}}

\newcommand\op{o_{\mathrm p}}

\newcommand\bbR{\mathbb R}

\newcommand\bbN{\mathbb N}

\newcommand\bbQ{\mathbb Q}
\newcommand\bbZ{\mathbb Z}

\newcounter{CC}
\newcounter{cc}

\renewcommand\Re{\operatorname{Re}}

\newcommand\E{\operatorname{\mathbb E}{}} 
\renewcommand\P{\operatorname{\mathbb P{}}}

\newcommand\Var{\operatorname{Var}}
\newcommand\Cov{\operatorname{Cov}}

\newcommand\Po{\operatorname{Po}}

\newcommand\Be{\operatorname{Be}}

\newcommand\ga{\alpha}
\newcommand\gb{\beta}
\newcommand\gd{\delta}

\newcommand\gf{\varphi}
\newcommand\GF{\Phi}
\newcommand\gam{\gamma}

\newcommand\gG{\Gamma}

\newcommand\kk{\kappa}
\newcommand\gl{\lambda}

\newcommand\gO{\Omega}
\newcommand\gs{\sigma}

\newcommand\gss{\sigma^2}

\newcommand\gU{\Upsilon}
\newcommand\eps{\varepsilon}
\renewcommand\phi{\xxx}  

\newcommand\cA{\mathcal A}
\newcommand\cB{\mathcal B}

\newcommand\cD{\mathcal D}

\newcommand\cL{{\mathcal L}}

\newcommand\cU{{\mathcal U}}
\newcommand\cUx{\cU^*}

\newcommand\cX{{\mathcal X}}

\newcommand\indic[1]{\boldsymbol1\cpar{#1}}

\newcommand\qw{^{-1}}

\newcommand\intoi{\int_0^1}

\newcommand\intox{\int_0^x}
\newcommand\intoo{\int_0^\infty}

\newcommand\oi{\ensuremath{[0,1]}}

\newcommand\setoi{\set{0,1}}

\newcommand\dd{\,\mathrm{d}}

\newcommand\rhs{right-hand side}

\newcommand\xT{\widetilde{\TR}}
\newcommand\Tqq[1]{\xT_{#1}}
\newcommand\Tgl{\Tqq{\gl}}

\newcommand\fA{{\mathfrak S}}
\newcommand\fT{{\mathfrak T}}
\newcommand\fTo{{\fT_0}}
\newcommand\hfT{{\widehat{\mathfrak T}}}
\newcommand\fTsett[1]{\mathfrak{T}^{#1}}
\newcommand\fTset[1]{\mathfrak{T}^{\set{#1}}}
\newcommand\fTp{\mathfrak{T}'}
\newcommand\fTPath{{\fT_P}}
\newcommand\cht{\widecheck{\fT}_t}
\newcommand\chtt{\widecheck{\fT}_t^+}
\newcommand\chtx[1]{\widecheck{\fT}_{#1}^+}

\newcommand\bT{{\overline{T}}}
\newcommand\hT{{\widehat{T}}}
\newcommand\tT{T}
\newcommand\Tx{T^*}
\newcommand\Txoo{\Tx_\infty}
\newcommand\TI{\bullet}
\newcommand\BST{\cB}
\newcommand\BSTx{\BST^*}

\newcommand\BSTxoo{\BSTx_\infty}

\newcommand\BSTY{\cB'}
\newcommand\eBST{\overline{\BST}}
\newcommand\eBSTx{\overline{\BST}^*}
\newcommand\eBSTxx{\overline{\BST}^{**}}
\newcommand\eBSTxoo{\eBSTx_\infty}
\newcommand\eBSTxxoo{\eBSTxx_\infty}
\newcommand\et{{\overline{t}}}
\newcommand\hBST{\widehat{\BST}}
\newcommand\hBSTx{\widehat{\BST}^*}
\newcommand\hBSTxx{\widehat{\BST}^{**}}
\newcommand\hBSTxoo{\hBSTx_\infty}
\newcommand\hBSTxxoo{\hBSTxx_\infty}
\newcommand\TR{\gU}
\newcommand\hTR{\widehat{\TR}}
\newcommand\Tn{\TR_n}

\newcommand\hTn{\hTR_n}

\newcommand\hTnx{\hTR_n^*}
\newcommand\hTnxx{\hTR_n^{**}}
\newcommand\CB{\cD}
\newcommand\CBx{\CB^*}
\newcommand\ld{d_{\mathsf{L}}}
\newcommand\rd{d_{\mathsf{R}}}
\newcommand\LPL{\mathsf{LPL}}
\newcommand\RPL{\mathsf{RPL}}
\newcommand\sss[1]{^{(#1)}}
\newcommand\esize[1]{|#1|_{\mathsf{e}}}
\newcommand\isize[1]{|#1|_{\mathsf{i}}}
\newcommand\dpp{d_p}

\newcommand\sxx[1]{\mathsf{#1}}
\newcommand\sE{\sxx{E}}
\newcommand\sV{\sxx{V}}
\newcommand\sC{\sxx{C}}
\newcommand\sX{\sxx{X}}
\newcommand\fE{f_{\sE}}
\newcommand\fV{f_{\sV}}
\newcommand\fC{f_{\sC}}
\newcommand\fX{f_{\sX}}

\newcommand\Mx[1]{#1^*}

\newcommand\MfE{\Mx{\fE}}
\newcommand\MfV{\Mx{\fV}}
\newcommand\MfC{\Mx{\fC}}
\newcommand\MfX{\Mx{\fX}}

\newcommand\psiE{\psi_{\sE}}
\newcommand\psiV{\psi_{\sV}}
\newcommand\psiC{\psi_{\sC}}
\newcommand\psiX{\psi_{\sX}}

\newcommand\sL{{\mathsf{L}}}
\newcommand\sR{{\mathsf{R}}}

\newcommand{\sumkoooo}{\sum_{k=-\infty}^\infty}

\newcommand\cAx{\cA^*}

\newcommand\sumaxx{{\sum_{\ga}}'}
\newcommand\tsumaxx{\sum_{\ga}'}
\newcommand\TXX{{T}^{**}}
\newcommand\TXXX[1]{{T}^{**(#1)}}
\newcommand\TXXoo{\TXX_\infty}
\newcommand\TXXXoo[1]{\TXXX{#1}_\infty}

\newcommand\CBXX{{\CB}^{**}}
\newcommand\CBXXX[1]{{\CB}^{**(#1)}}
\newcommand\CBXXoo{\CBXX_\infty}
\newcommand\CBXXXoo[1]{\CBXXX{#1}_\infty}
\newcommand\qsin{q}
\newcommand\chio{\frac{2\pi}{\log 2}}
\newcommand\ppi{p}
\newcommand\psio{\psi_0}
\newcommand\hpsio{\widehat{\psi}_0}
\newcommand\Law{\mathfrak L}
\newcommand\gammu{\gamma^2_U}

\newcommand\DTCS{\mathrm{DTCS}}
\newcommand\internal{^\circ}
\newcommand\hgb{\hat\gb}
\newcommand\hgam{\hat\gamma}
\newcommand\hgamm{\hat\gamma^2}
\newcommand\egb{\bar\gb}
\newcommand\egam{\bar\gam}
\newcommand\egamm{\bar\gam^2}
\newcommand\tint{{t\internal}}
\newcommand\mux[1]{\mu(#1)}
\newcommand\suminu{\sum_{i=1}^\nu}




\hyphenation{Upp-sala}


\begin{document}

\begin{abstract} 
We study the distribution of fringe trees in 
Patricia tries (extending earlier results by Ischebeck (2025))
and compressed binary search trees; both cases are random binary trees that
have been 
compressed by deleting nodes of outdegree 1 so that they are random full
binary trees. The main results are central limit theorems for the number of
fringe trees of a given type, which imply quenched and annealed limit
results for the fringe tree distribution; for Patricia tries, this is
complicated by periodic oscillations in the usual manner.
We also consider extended fringe trees.
The results are derived from earlier results
for uncompressed tries and binary search trees.
In the case of compressed binary search trees, it seems difficult to give a
closed formula for the asymptotic fringe tree distribution, but we provide a
recursion and give examples.

For comparison, we give also results, simpler and partly known, for
three other models of random full binary trees: 
the extended binary search tree,
the critical beta-spltting random tree,
and
the uniform random full binary tree.
\end{abstract}

\maketitle

\section{Introduction}\label{S:intro}
In this paper, we study and compare fringe tree distributions for some
different types of random full binary trees.
The paper is inspired by the work by 
Aldous \cite{Aldous-clad,Aldous-beta2,Aldous-beta3,Aldous-beta1}
on random models for
phylogenetic trees, more precisely \emph{cladograms}. 
(These are for our purposes essentially the same as full binary trees; 
see \refR{Rclade1}.) 
Aldous \cite{Aldous-clad} introduced the \emph{beta-splitting model}
of a random full binary tree with a given number $n$ of leaves.
The model has a parameter $\gb\in [-2,\infty]$, and Aldous noted 
\cite[Section 4.1]{Aldous-clad}
that three special cases
give models that have appeared in many other applications (in, for example,
computer science); they are, with the notations used in the present paper
(see \refSS{SS1def} below for definitions):
\begin{description}
\item[$\beta = -3/2$] 
the uniform random full binary tree $\cU_n$.
\item[$\beta = 0$] 
the extended binary search tree $\eBST_{n-1}$,
\item[$\beta = \infty$] 
the random Patricia trie $\hTn$ (in the symmetric case $p=1/2$).
\end{description}
A fourth special case,
studied in, for example, \cite{Aldous-beta2, Aldous-beta3,Aldous-beta1}
is
\begin{description}
\item[$\beta = -1$] 
the critical beta-splitting random tree $\CB_n$;
\end{description}
this is also  very interesting because of its rich mathematically structure 
(and also fitting real phylogenetic trees well  in at least
some ways, see \cite[Sections 4.3--4.4]{Aldous-beta2}).

Fringe trees for several of these models have been studied before, 
see the references given in \refSS{SSfringe}.
We give in the present paper detailed results for these four models,
partly collected or adapted from earlier work,
and we complement these results with 
corresponding results for
the compressed binary search tree $\hBST_n$.

We consider asymptotics as the size  of the random tree tends to
infinity. 
For
a binary search tree (BST) $\BST_n$,
or a
random trie $\TR_n$, 
the asymptotic distribution of
a random fringe tree 
is given already by \citet{Aldous-fringe}.
More precise result including asymptotic normality of
the fringe tree counts
are proved in 
\cite{Devroye1991,Devroye2002} (BST) and
\cite{SJ347} (tries);
see also \eg{} \cite{FlajoletGM1997}, \cite{HwangN},
and
\cite{SJ296}.
These results can be used to derive corresponding results for 
Patricia tries \cite{Ischebeck}
and extended or compressed binary search trees, 
see \refSs{SPat}--\ref{SBST} below.
In particular, in these cases the fringe tree counts 
are asymptotically normal.

For the critical beta-splitting random tree $\CB_n$
the asymptotic  fringe tree distribution is found in \cite{Aldous-beta3},
see also \cite{Aldous-beta2}.
We improve earlier results and show convergence in probability of
(normalized) subtree counts 
(\refS{SCB}), 
but asymptotic normality has not yet been shown and remains an open problem.

For the uniform random full binary tree $\cU_n$,
the asymptotic tree fringe distribution was again found by \cite{Aldous-fringe}
(as a special case of 
results for more general conditioned Galton--Watson trees).
Also in this case, the fringe tree counts are asymptotically normal
\cite[Corollary 1.8]{SJ285}. 
This case is included below (\refS{Suni}) for comparison, but our results
are at most reformulations of known results and are not really new.

For Patricia tries $\hTn$, we find
explicitly the asymptotics of the mean and variance of 
fringe tree counts.
For the extended binary search tree $\eBST_n$, the asymptotic mean and variance 
of fringe tree counts
follow from known formulas for the binary search tree
$\BST_n$.
The asymptotic mean and variance of fringe tree counts for the uniform full
binary tree $\cU_n$ are also known.
However, for the compressed binary search tree 
$\hBST_n$, it seems more difficult 
to find the asymptotic mean and variance of  fringe tree counts.
We show how to find explicitly
the asymptotics of the mean, 
and thus the asymptotic distribution of a random fringe tree,
but this is done by a recursion and we cannot give a
general formula.
We leave as an open problem to find a formula for the asymptotic
variance of fringe tree counts for the compressed binary search tree.

In \refS{Sex} we 
give as examples explicit results for some small fringe trees.
We give there also tables with numerical values to faciliate comparison of the
five different models. One table includes for comparison also
numerical data, taken from 
\cite[Figure~7 and Section~5.3]{Aldous-beta3},
for a small sample of real cladograms.

\begin{remark}
Related results for fringe trees in other classes of random trees 
(with convergence in probability, and sometimes asymptotic normality)
have been given by many authors, see for example
\cite{Aldous-fringe}, 
\cite{BenniesK}, 
\cite{FengMahmoudP08}, 
\cite{Fuchs}, 
\cite{Chang}, 
\cite{DennertGrubel}, 
\cite{Fuchs2012}, 
\cite{GopMW}, 
\cite{SJ296}, 
\cite{SJ285}, 
\cite{SJ306}, 
\cite{SJ309}, 
\cite{SJ347}, 
\cite{SJ378}, 
and the further references therein.
\end{remark}

\subsection{The studied trees}\label{SS1def}
We use the following (common, but not universal) notation. 
(See \refSS{SSNot} for further notation.)
A \emph{binary tree} is a rooted tree such that each child of a node is
labelled either \emph{left} or \emph{right}, and each node has at most one
left and at most one right child.
A \emph{full binary tree} is a binary tree where each node has outdegree 0
or 2. (In the latter case, it thus has one left and one right child.)
A \emph{leaf} is a node with no children (i.e., outdegree 0).
The \emph{size} $|T|$ of a tree $T$ is its number of nodes,
and the \emph{leaf size}
$\esize{T}$ is its number of leaves.
All trees in the paper are non-empty, finite, and binary (and thus rooted),
except when we explicitly say otherwise.
We will mainly consider full binary trees, but note that binary trees are
not assumed to be full unless we say so.

It is well-known that a full binary tree has odd size, and that there is a
bijection between general binary trees of size $n\ge1$ and full binary trees
of size $2n+1$ defined as follows: 
Given a binary tree $T$, 
its \emph{extension} $\bT$
is the full binary tree obtained by
adding new leaves 
(often called \emph{external nodes})
at all
possible places, i.e., we add one new leaf to each node of outdegree 1, 
and two new leaves to each node of outdegree 0.
Conversely, the inverse map is: 
given a full binary tree, delete all its leaves. 
For many purposes, full and general binary trees are
thus equivalent, but both types are important, and there are several reasons
for studying properties of both classes of binary trees.

In the present paper we will also
study another relation between general binary trees and
full binary trees.
Given a binary tree $T$, its \emph{compression} $\hT$ is the full binary
tree obtained by deleting all nodes of outdegree 1 (and connecting the
remaining nodes in the obvious way).
This is obviously not a bijection; there is no way to reconstruct the binary
tree without further information. Note also that the 
size of the compressed tree typically is smaller; 
we have $1\le|\hT|\le|T|$, and every size in this range is possible;
however, $\hT$ and $T$ have the same number of leaves.

\begin{remark}\label{Rclade1}
  A \emph{cladogram} is a full binary tree with labelled leaves, 
where we do not 
care about the orientations, \ie, we do not
distinguish between left and right. (Formally, we may see a cladogram as an
equivalence classe of leaf-labelled full binary trees.)
In particular, the random full binary trees studied here may be regarded as
random cladograms by 
labelling the leaves (randomly, say) and
forgetting all orientations. 
Conversely, any cladogram may be regarded as a random leaf-labelled
full binary tree by randomly assigning orientations at all internal nodes.
It will be convenient for us to study full binary trees, but 
we can thus regard them as random cladograms, and mathematically this is
essentially equivalent. (At least in symmetric cases; for Patricia tries
with $p\neq\frac12$, the orientation is important.)
In particular, the results below on fringe trees yield as corollaries 
corresponding results for fringe trees of the corresponding random
cladograms
(these fringe trees are themselves random cladograms); we omit the
details. (See also \refR{Rclade2}.) 
\end{remark}

The random trees studied in the present paper are the following.

\subsubsection{Tries}
One well-known example of compression is for tries:
a \emph{trie} is a binary tree
constructed from a sequence of distinct infinite strings of 0 and 1
(see \refSS{SStries} for details), and
its compression is known as a \emph{Patricia trie};
see \cite[Section 6.3]{KnuthIII} for the computer science background.
We study in this paper tries and Patricia tries defined by independent
random strings where all bits are independent.
We denote the random trie defined by $n$ random strings by $\TR_n$,
and the corresponding Patricia trie by $\hTR_n$; these thus have $n$ leaves.

\subsubsection{Binary search trees}
The \emph{binary search tree (BST)} 
is another commonly studied random binary
tree. (See \refSS{SSBST} for definition.) 
We denote the random BST with $n$ nodes by $\BST_n$.
Just as tries and Patricia tries, it appears naturally in computer science
in connection with sorting and searching, see \eg{} 
\cite[Section 6.2.2]{KnuthIII}.
We will here study its extension, 
the \emph{extended binary search tree} $\eBST_n$ 
which is a full binary tree 
(with $n$ internal nodes and $n+1$ leaves).
The compressed version is perhaps less interesting in the computer-science
context, but 
from a 
mathematical perspective,
and as an analogue of Patricia tries,
we find it natural to also study the
\emph{compressed binary search tree},
which we denote by  $\hBST_n$.

\subsubsection{Uniform random full binary trees}
The \emph{uniform random full binary tree} $\cU_n$ is, as the name indicates, a
random tree sampled uniformly from the set of all full binary trees with $n$
leaves.

\subsubsection{Beta-splitting random trees}\label{SSSbeta}
The general definition of the \emph{beta-splitting random tree} with $n$ leaves
\cite{Aldous-clad} is that (if $n\ge2$)
the leaves are split randomly between the left
and right subtree at the root, such that the probability that the left
subtree has $i$ leaves is 
\begin{align}\label{beta}
  c(n;\gb)\frac{\gG(\gb+i+1)\gG(\gb+n-i+1)}{\gG(i+1)\gG(n-i+1)},
\qquad 1\le i\le n-1,
\end{align}
for a normalizing constant $c(n;\gb)$;
the construction proceeds then recursively in each subtree.
It can be checked that the cases $\gb=-3/2, 0, \infty$ (the latter
interpreted in a limiting sense) yield the random full binary trees 
$\cU_n$, $\eBST_{n-1}$, $\hTR_n$ (with $p=\frac12$) above
\cite[Section 4.1]{Aldous-clad}.
We consider here also
the case $\gb=-1$, known as the \emph{critical beta-splitting random tree},
which we denote by $\CB_n$
(it is denoted $\DTCS(n)$ in \cite{Aldous-beta2,Aldous-beta3}); 
it is thus defined with the splitting probabilities
\eqref{beta} being
\begin{align}\label{beta-1}
 \frac{n}{2h_{n-1}}\cdot\frac{1}{i(n-i)}
=
 \frac{1}{2h_{n-1}}\Bigpar{\frac{1}{i}+\frac{1}{n-i}},
\qquad 1\le i\le n-1,
\end{align}
where we use  the harmonic numbers
\begin{align}\label{harmonic}
h_n:=\sum_1^n\frac1i.   
\end{align}

\subsection{Fringe trees}\label{SSfringe}
In this paper we focus on  fringe trees of the various random trees.
 For a rooted tree $T$ and a node $v\in T$, 
the \emph{fringe tree} $T^v$
is the subtree of $T$ consisting of $v$ and all its descendants; 
the fringe tree $T^v$ is
a rooted tree with root $v$, and if $T$ is a binary tree or a full
binary tree, then so is each of its fringe trees.
If $\tT$ and $t$ are two binary trees,
let $N_{t}(\tT)$ be the number of fringe trees of $\tT$ that are equal to
$t$ (in the sense of isomorphic as binary trees), i.e.,
\begin{align}\label{tor1}
  N_{t}(\tT) := \bigabs{\set{v\in \tT:\tT^v=t}}.
\end{align}
Here $t$ will always be a fixed tree (think of it as small),
while $\tT$ usually will be a (big) random tree; then $N_{t}(\tT)$ is a
random variable.
We consider also the random fringe tree $\Tx$ defined as $T^v$ for a 
node $v\in T$ chosen uniformly at random.
(If $T$ is random, we first condition on $T$ and then choose a node $v$
in it.)
Thus, for every fixed binary tree $t$ and a 
non-random binary tree $T$,
\begin{align}\label{tor3}
\P(\Tx=t)= \frac{ N_{t}(T)}{|T|}.
\end{align}
For a random binary tree $T$, \eqref{tor3} holds conditionally on $T$,
and thus
\begin{align}\label{tor2}
\P(\Tx=t)= \E\frac{ N_{t}(T)}{|T|}.
\end{align}
See \cite{Aldous-fringe} for a general study of fringe tree distributions,
including explicit results for several classes of random trees.

In this paper we discuss results for the random fringe tree $\Tx_n$ of a
sequence of random trees $T_n$.
We state results both conditioned on the tree $T_n$ 
and unconditioned, and
we use the standard  terminology and call 
such results \emph{quenched} and \emph{annealed}, repectively.

\begin{remark}
We may also care only about the leaf size
of the fringe trees, and define, for $m\ge1$,
\begin{align}
  \label{tor10}
  N_m(T) := \bigabs{\set{v\in T:\esize{T^v}=m}}
=\sum_{t:\esize{t}=m}N_t(T)
.\end{align}
Let $\cX_n$ denote one of the random trees $\hTR_n, \eBST_{n-1}, \cD_n$
studied here (which all have $n$ leaves).
Then
the recursive construction of $\cX_n$ implies  that conditioned on some fringe
tree $\cX_n^v$ having $m$ leaves, that fringe tree has the same distribution as 
the random tree $\cX_m$ of the same type with $m$ leaves.
The same holds evidently also for $\cX_n=\cU_n$.
In particular, we have for any full binary tree $t$ with $m$ leaves
\begin{align}
  \label{tor11}
\E [N_t(\cX_n)] = \P(T_m=t)\E[N_m(\cX_n)].
\end{align}
\end{remark}

\section{Preliminaries}\label{Sprel}

\subsection{Notation}\label{SSNot}
(See also \refSS{SS1def}; we allow some repetitions below for clarity.)
For a rooted tree $T$, the set of leaves of $T$ is denoted $\cL(T)$.
The leaves are also called \emph{external nodes} and the other nodes, \ie,
those with outdegree $>0$, are called \emph{internal nodes}.
The number of nodes of $T$ is denoted by $|T|$
(the \emph{size} of $T$), 
the number of leaves
(external nodes) by
$\esize{T}:=|\cL(T)|$
(the \emph{leaf size} of $T$),
and the number of internal nodes by
$\isize{T}:=|T|-\esize{T}$.
Recall that in a binary tree $T$, 
the number of nodes of outdegree 2 is $\esize{T}-1$, and thus 
the number of nodes of outdegree 1 is $|T|-2\esize{T}+1$.
Hence, if $T$ is a full binary tree, then 
$\isize{T}=\esize{T}-1$ and 
\begin{align}\label{full}
|T|=2\esize{T}-1.  
\end{align}

The \emph{root degree} $\rho(T)$ is the (out)degree of the root $o\in T$.

We let $\TI$ denote the tree consisting of a root only, so
$|\TI|=\esize{\TI}=1$. 

Let 
$\fT$ be the set of all  binary trees, and
$\hfT$  the subset of all full  binary trees, and
$\fTp$ the set of all binary trees such that no leaf has a parent of
outdegree 1;
thus $\hfT\subset \fTp\subset\fT$.
Furthermore, for any set $S\subseteq\set{0,1,2}$, let 
$\fTsett{S}:=\set{T\in\fT:\rho(T)\in S}$
be the set of all binary trees with root degree in $S$.
In particular, $\fTset0=\set{\TI}$.

If $t$ is a full binary tree, let
$\cht$ be the set of all binary trees that can be obtained from $t$ by
subdividing the edges, \ie, by
replacing every edge by a path of $\ell\ge1$ edges;
each such path thus contains $\ell-1$ new nodes of outdegree 1. 
Note
that each new node has its (only) child as either a left or a right child.
Let further $\chtt$ 
be the set of all binary trees that compress to $t$; 
note that this is larger than $\cht$ since
$\chtt$ allows also adding a path from the root to a new 
root, but $\cht$ does not; in fact, $\cht=\chtt\cap\fTset{0,2}$.
(If $t\neq\TI$, then $\cht=\chtt\cap\fTset{2}$.)

If $v$ is a node in a binary tree $T$, then the \emph{left depth} $\ld(v)$
is the number of edges 
that go from some node to its left child
in the path from the root to $v$.
The \emph{right depth} $\rd(v)$ is defined similarly. 
Note that $\ld(v)+\rd(v)=d(v)$, the depth of $v$.
When necessary, we write $d_T(v)$ for the depth in $T$.

We define the \emph{left external path length} $\LPL(T)$
and \emph{right external path length} $\RPL(T)$ by
\begin{align}\label{LPL}
  \LPL(T) = \sum_{v \in\cL(T)}\ld(v),
\qquad
  \RPL(T) = \sum_{v \in\cL(T)}\rd(v).
\end{align}
Thus the sum $\LPL(T)+\RPL(T)$ equals the total external path length
$\sum_{v\in\cL(T)}d(v)$. 

The fringe tree $T^v$ and the fringe tree counts $N_{t}(T)$
are defined in the introduction.

We use  $\dto$ and $\pto$ to denote convergence in
distribution and probability, respectively, of random variables.
We further say that   \emph{$X_n\dto Y$ with all moments}
if $X_n\dto Y$  and also $\E [X_n^r]\to \E [Y^r]$ for every integer $r>0$.
We let $\op(1)$ denote any sequence of random variables $X_n$ such that
$X_n\pto0$.

$\Law(X)$ denotes the distribution of a random variable $X$.

$N(\mu,\gss)$ denotes the normal distribution with mean $\mu$ and variance
$\gss\ge0$. 
$\Po(\gl)$ denotes the Poisson distribution with parameter $\gl\ge0$.
We thus have 
\begin{align}
  \label{po}
\Po(\gl;n):=\Po(\gl)(n) = \frac{\gl^n}{n!}e^{-\gl},
\qquad n=0,1,\dots
\end{align}

We may sometimes abbreviate ``uniformly random'' to ``random''.
$\log$ denotes natural logarithms.
Unspecified limits are as \ntoo.

\subsection{Tries}\label{SStries}

A \emph{trie}
(see \eg{} \cite[Section 6.3]{KnuthIII} and \cite[Section 1.4.4]{Drmota})
is a rooted tree constructed from a set of $n\ge1$ distinct
strings $\Xi\sss1,\Xi\sss2,\dots,\allowbreak\Xi\sss{n}$ 
in some alphabet $\cA$;
we consider here only the case $\cA=\setoi$, and then
the trie will be a binary tree.
(Fringe trees for tries defined by arbitrary finite alphabets are treated by
\citet{Ischebeck}.)
We assume for convenience that the strings are infinite;
thus $\Xi\sss{i}=\xi\sss i_1\xi\sss i_2\dotsm\in\setoi^\infty$ for every $i$.
The trie is constructed recursively. 
If $n=0$, then the trie  is the empty tree $\emptyset$.
Otherwise,
we begin with a root, and put every string in the root. 
If $n=1$, then we stop there, so the trie equals $\TI$.
Otherwise, \ie, if $n\ge2$, we pass all strings
to new nodes; 
all strings beginning with 0 (if any) are passed to a left child of the root
and 
all strings beginning with 1 (if any) are passed to a right child of the root.
We continue recursively, the next time partitioning the strings according to
the second letter, and so on, always looking at the first letter not yet
inspected.
At the end there is a tree with $n$ leaves, each containing one string. 
Equivalently, each string $\Xi\sss i$ defines an infinite path $\gG_i$
from the root
in the infinite binary tree, by processing the letters in the string in
order and going to a left child for every 0 and a right child for every 1.
We then stop each path at first node that it does not share with any other
of the paths $\gG_j$; these nodes are the leaves of the trie, and the
trie is the union of the $n$ stopped paths.
Note that in a trie, a parent of a leaf must have outdegree 2, \ie, a trie
belongs to the set $\fTp$.

We will consider the random trie $\TR_n$ defined by $n$ random strings
$\Xi\sss1,\dots,\Xi\sss{n}$ where we assume that the strings are
independent, and furthermore in each string the letters are independent
and identically distributed with distribution $\Be(p)$ for some $p\in(0,1)$,
\ie, each letter $\xi\sss{i}_k$ has $\P(\xi\sss{i}_k=1)=p$ and
$\P(\xi\sss{i}_k=0)=q:=1-p$.
We omit the parameter $p$ from the notation, but it is implicit when we
discuss tries; we use always the notations $p$ and $q=1-p$ in the sense above.

It is well-known that many results for tries show (typically small) periodic
oscillations instead of limits as \ntoo,
see \eg{}
\cite{
KnuthIII, 
Mahmoud, 
FlajoletRV, 
FHZ2010, 
SJ242, 
FHZ2014, 
JS-Analytic, 
SJ347}.
More precisely, if $\log p/\log q$ is irrational, then such oscillations do
not occur, but if $\log p/\log q$ is rational they typically do.
We call the case when $\log p/\log q\in\bbQ$ \emph{periodic}; 
otherwise we have the \emph{aperiodic} case.
In the periodic case, if $\log p/\log q$ equals $a/b$ in lowest terms 
(with $a,b\in\bbN$), then define 
\begin{align}\label{dp}
  d =
\dpp:=\frac{-\log p}a=\frac{-\log q}b
>0, 
\end{align}
the greatest
common divisor of $-\log p$ and $-\log q$.

The Patricia trie $\hTR_n$ is obtained by compressing $\TR_n$.
Note that the trie $\TR_n$ and the Patricia tree $\hTR_n$ both have exactly
$n$ leaves: 
\begin{align}
\esize{\TR_n}=\esize{\hTR_n}=n.  
\end{align}
The Patricia tree $\hTR_n$ thus has $n-1$ internal nodes, while
the number of internal nodes in $\TR_n$ is random.

\subsection{Binary search trees}\label{SSBST}

Binary search trees may be constructed in different (but equivalent) ways;
we will use the following, closely connected to the sorting algorithm
\emph{Quicksort}
(see \eg{} \cite[Section 6.2.2]{KnuthIII} and
\cite[Section 1.4.1]{Drmota}):
Consider a set of $n$ distinct items, 
which we may assume are real numbers $x_1,\dots,x_n$. 
If $n=0$, the BST $\BST_0$ is the empty tree $\emptyset$, and if $n=1$,
$\BST_1:=\TI$. 
If $n\ge 2$, pick one of the $n$ items at random, and call it the
\emph{pivot}. 
Compare all other elements to the pivot, and let $L$ be the set of all $x_i$
that are smaller than the pivot, and let $R$ be the set of all items
$x_i$ that are greater than the pivot. (Thus, $|L|+|R|=n-1$.)
The BST $\BST_n$ is defined as 
the binary tree with the root having left and right
subtrees that are
constructed recursively from the sets $L$ and $R$, respectively.
It is easily seen by induction that $|\BST_n|=n$; in fact, it is natural
to label the root by the pivot, and then during the recursion
each node becomes labelled by
exactly one of the $n$ numbers $x_i$.
Note that in the construction, $|L|$ is uniformly distributed over the
$n$ numbers \set{0,1,\dots,n-1}.
Hence \cite[Section 4.1]{Aldous-clad},
by comparing with the definition above of the beta-splitting random tree
and noting that for $\gb=0$, \eqref{beta} yields the uniform distribution on
$\set{1,\dots,n-1}$, we see by induction that the beta-splitting random tree
with $n$ leaves and $\beta=0$ equals the \emph{extended binary search tree}
$\eBST_{n-1}$ defined by adding $n$ leaves (external nodes) to $\BST_{n-1}$.

We will also, for comparison with the other full binary trees studied here,
consider the \emph{compressed binary search tree} $\hBST_n$, defined (as
said in the introduction) by removing (contracting) all nodes of outdegree 1.
Note that both $\esize{\BST_n}=\esize{\hBST_n}$ and $|\hBST_n|$ are random.

It is shown by
\citet[Example 3.3]{Aldous-fringe} and
\citet[Theorem 2]{Devroye1991},
that, as \ntoo, 
\begin{align}\label{jw1}
\esize{\BST_n}/n=\esize{\hBST_n}/n\pto 1/3
\end{align}
and thus, see \eqref{full},
 \begin{align}\label{jw2}
  |\hBST_n|/n\pto 2/3. 
\end{align}
More precisely   \cite{Devroye1991},
\begin{align}\label{jw3}
  \frac{\esize{\BST_n}-n/3}{\sqrt{n}}
=  \frac{\esize{\hBST_n}-n/3}{\sqrt{n}}
\dto N(0,2/45)
\end{align}
and thus  $ \frac{\abs{\hBST_n}-2n/3}{\sqrt{n}}\dto N(0,8/45)$
by \eqref{full};
see also 
\cite{Mahmoud1986} for expectations and
\eg{} \cite[Theorem 6.9]{Drmota} for a more general result.

\subsection{Fringe trees of compressed trees}\label{SSfringe2}
Let $T$ be a binary tree and $\hT$ its compression.
Note first that the leaves of $\hT$ are precisely the leaves of $T$,
while the roots may differ. (In general, there may in $T$
be a path of nodes of outdegree 1 from the root of $T$ to the root of
$\hT$.)  Thus, $\esize{\hT}=\esize{T}$.

Let $t$ be a full binary tree and consider the nodes $v\in \hT$ such that
$\hT^v=t$.  We consider two cases separately:
\begin{romenumerate}
\item   
If $t=\TI$, then:
$v\in\hT$ and $\hT^v=t$
$\iff$ $v$ is a leaf in $\hT$
$\iff$
$v$ is a leaf in $T$
\\$\iff$ $v\in T$ and $T^v=t$.

\item 
If $|t|>1$, then 
the root of $t$ has outdegree 2, and:\\
$v\in\hT$ and $\hT^v=t$ 
$\iff$ $v$ has outdegree 2 in $T$ and 
$t$ is the contraction of $T^v$.
\end{romenumerate}
Consequently, in both cases,
\begin{align}
  \label{fr1}
v\in\hT \text{ and } \hT^v=t \iff  v\in T \text{ and } T^v\in \cht, 
\end{align}
where we recall that
$\cht$ is the set of all binary trees that can be obtained from $t$ by
subdividing the edges.

Define the functional $\gf$ on binary trees by 
\begin{align}
  \label{od0}
\gf(T):=\indic{T\in\cht},
\end{align}
and define the corresponding additive functional
\begin{align}\label{od1}
  \Phi(T):=\sum_{v\in T}\gf(T^v).
\end{align}
Then, by \eqref{fr1},
\begin{align}\label{od2}
  N_{t}(\hT) 
=\sum_{v\in\hT}\indic{\hT^v=t}
=\sum_{v\in T}\indic{T^v\in\cht}
=\sum_{v\in T}\gf(T^v)
= \Phi(T).
\end{align}

\subsection{Extended fringe trees}\label{SSext}
Let $T$ be a rooted tree.
The fringe tree $T^v$ defined in the introduction consists of a node $v$ and
all its descendants.
\citet{Aldous-fringe} introduces also the \emph{extended fringe tree} by 
also going up from the chosen node $v$ to ancestors and then taking
descendants, thus including siblings,
cousins, and so on; we may formally 
define the extended fringe tree as the nested sequence
of fringe trees $(T^{v_i})_{i=0}^{d(v)}$ where $v_i$ is the $i$th ancestor of $v$.
(See \cite[Section 4]{Aldous-fringe} for details.)
It is shown in \cite[Proposition 11]{Aldous-fringe} that if the 
random fringe trees of some sequence of random trees $T_n$ converge in
distribution,
then so do the random extended fringe trees.
The limit then is an infinite nested sequence of random trees 
$(T_\infty\sss i)_{i\ge0}$.

\begin{remark}\label{Rsin}
 Assume that these limits 
$T_\infty\sss i$
exist and
also the technical condition (satisfied for the random trees
studied here) that the depth of a random node in $T_n$ tends to infinity
in probability.
Then the random limit sequence 
$(T_\infty\sss i)_{i\ge0}$
can be combined to a random  infinite tree,
called \emph{sin-tree},
with a single infinite path from $o$ consisting of
the roots of the trees $T_\infty\sss i$, see \cite{Aldous-fringe}.
The results below can be translated to this, more intuitive, description of
the limit of the extended fringe tree.
\end{remark}

We consider in the present paper, as in \cite{Aldous-beta2,Aldous-beta3},
the extended fringe tree 
$\TXX=(\TXXX{i})_i$ 
of a uniformly random \emph{leaf} in the tree $T$
(instead of a random node as for $\Tx$); 
this also converges in 
distribution when the random fringe tree does, since it can be regarded as
the random extended fringe tree conditioned on the fringe tree 
$\Tx=\TXXX0$
being $\TI$.

\begin{remark}
$\TXXX{i}$ is undefined for $i$ greater than the depth of the chosen random
leaf; this is no problem for us since for the random trees studied here, 
for every fixed $i$, the probability that the depth of a random leaf is 
at most $i$
tends to 0; hence $\TXXX{i}$ is defined with probability tending to 1, which
is all we need.
Cf.~\refR{Rsin}.  
\end{remark}

It is easy to see that
if $T$ is any (deterministic or random) tree, 
$(t,\ell)$ is a pair of a tree $t$ and a marked leaf $\ell\in t$,
$i\ge0$,
and $o$ is the root of $\TXX$ (defined as the root of $\TXXX0$), then
\begin{align}\label{txx1}
  \P\bigpar{(\TXXX i,o)=(t,\ell)} = q(T;t,\ell)\indic{ i=d_t(\ell)}
,\end{align}
where, letting $L$ be a uniformly random leaf in $T$,
\begin{align}\label{qsinl1}
\qsin(T;t,\ell):=\P\bigpar{\exists w\in T: L\in T^w \text{ and } (T^w,L) 
\text{ is isomorphic to } (t,\ell)}.
\end{align}
We define also the simpler
\begin{align}\label{qsin1}
\qsin(T;t):=\P\bigpar{\text{a random leaf $v$ lies in some fringe tree 
$T^w$ isomorphic to $t$}}.
\end{align}
Note that, if $|T|>1$, any leaf lies in several fringe trees of different
sizes; hence $\sum_t \qsin(T;t)>1$ 
so $\qsin(T;\cdot)$ is not a probability distribution on trees.
In fact, trivially $\qsin(T;\TI)=1$ for every tree $T$.

For a random tree $T$ we define also $\qsin(T;t,\ell\mid T)$ 
and $\qsin(T;t\mid T)$ 
by \eqref{qsinl1} and \eqref{qsin1}
conditioned on $T$.
Thus $\qsin(T;t\mid T)$ is a functional of $T$, and thus a random variable, 
while $\qsin(T;t)$ is a  number depending on the distribution of $T$ only,
and similarly for $\qsin(T;t,\ell)$.
We have, as always for conditional expectations,
\begin{align}\label{qsin00}
  \qsin(T;t,\ell) = \E[\qsin(T;t,\ell\mid T)],
\qquad
  \qsin(T;t) = \E[\qsin(T;t\mid T)].
\end{align}

Let $t$ be a fixed tree with a marked leaf $\ell$, and let $T$ be a
(deterministic or random) tree.
For every fringe tree $T^w$ that is isomorphic to $t$, 
there is exactly one leaf $L$ in $T$ such that $L\in T^w$ and
$(T^w,L)=(t,\ell)$.
Since all copies of $t$ in $T$ are disjoint, these leaves $L$ are distinct
for different fringe trees $T^w=t$, and thus (for a random tree $T$)
\begin{align}\label{qsin2l}
  \qsin(T;t,\ell\mid T)=\frac{N_t(T)}{\esize{T}}
\end{align}
and hence 
\begin{align}\label{qsin3l}
  \qsin(T;t,\ell)=\E\frac{N_t(T)}{\esize{T}}.
\end{align}
Consequently, recalling \eqref{txx1}, the distribution of the random
extended fringe tree $\TXX$ is determined by the fringe subtree counts
$N_t(T)$, and conversely.
More precisely, the quenched (i.e., conditional) distribution of $\TXX$ is
determined by the (random) counts $N_t(T)$, and the annealed distribution
by their expectations.)
Furthermore, again since the copies of $t$ in $T$ are disjoint, the
number of leaves that lie in some copy of $t$ equals $N_t(T)\esize{t}$,
and hence,
for any (random) tree $T$,
\begin{align}\label{qsin2}
  \qsin(T;t\mid T) = \frac{N_t(T)\esize{t}}{\esize{T}}
\end{align}
and 
\begin{align}\label{qsin3}
  \qsin(T;t) = \esize{t}\E\frac{N_t(T)}{\esize{T}}
.\end{align}
Consequently, for every leaf $\ell\in t$,
\begin{align}\label{qsin4}
  \qsin(T;t,\ell\mid T) = \frac{1}{\esize{t}}  \qsin(T;t\mid T),
\qquad   \qsin(T;t,\ell) = \frac{1}{\esize{t}}  \qsin(T,t).
\end{align}
(This follows also directly from \eqref{qsinl1}--\eqref{qsin1}, noting that
by symmetry $\qsin(T;t,\ell)$ does not depend on the leaf $\ell\in T$.)
Hence, it suffices to study the simpler $\qsin(T;t\mid T)$ and $\qsin(T;t)$
without marked leaves.

For a sequence of (deterministic or random)  trees $T_n$,
it follows from \eqref{txx1} and \eqref{qsin4} that
the extended fringe trees $\TXX_n=(\TXXX{i}_n)_i$ converge in distribution, to
some sequence of (random) trees $\TXXoo=(\TXXXoo i)_{i\ge0}$,
if and only if $q(T_n;t)$ converges for every fixed tree $t$,
and in this case the limit distribution is determined by the limits
\begin{align}\label{qsin5}
q(\TXXoo;t)
:=\lim_\ntoo q(T_n;t)
.\end{align}
More precisely, if these limits exist, then 
it follows from \eqref{txx1} and \eqref{qsin4} that 
if $o$ is the root of $\TXXoo$ (i.e., the root of $\TXXXoo0$),
then
for every tree $t$ and leaf $\ell\in t$ with $d_t(\ell)=i$,
\begin{align}\label{qsin6}
  \P\bigpar{(\TXXXoo{i},o)=(t,\ell)} 
=\frac{1}{\esize{t}}
\lim_\ntoo q(T_n;t)
=\frac{1}{\esize{t}}
q(\TXXoo;t)
.\end{align}

Note also that for any (deterministic) tree $T$, \eqref{qsin2} and \eqref{tor3}
yield
\begin{align}\label{qsin7}
  \qsin(T;t)
=\frac{\esize{t}|T|\P(\Tx=t)}{\esize{T}}
=\esize{t}\frac{|T|}{\esize{T}}\P(\Tx=t)
=\esize{t}\frac{\P(\Tx=t)}{\P(\Tx=\TI)}
.\end{align}
This leads to the following version of the result by
\citet[Proposition 11]{Aldous-fringe} that,
as said above, 
if a sequence of random fringe trees converge in distribution, 
then so do the extended fringe trees. 
(Note that we here, unlike \cite{Aldous-fringe}, consider extended fringe
trees rooted at a random leaf; the fringe trees $\Tx_n$ are (of course)
rooted at a random node.)
We also obtain a formula for the limits \eqref{qsin5}.
\begin{lemma}\label{Lfringe}
Let $(T_n)$ be a sequence of random trees such that, as \ntoo, 
\begin{align}
  \label{qsin20}
\P(\Tx_n=t\mid T_n) \pto \P(\Txoo=t)
\end{align}
for every fixed tree $t$ and
some random tree $\Txoo$ 
(a quenched limiting fringe tree).
Then
\begin{align}\label{qsin12}
  \qsin(T_n;t\mid T_n)
\pto
q(\TXXoo;t)
= \kk\esize{t}{\P(\Txoo=t)}
,\end{align}
where (with the limit in probability, in general)
\begin{align}\label{qsin13}
  \kk:=\frac{1}{\P(\Txoo=\TI)}=\lim_\ntoo\frac{|T_n|}{\esize{T_n}}.
\end{align}
In other words, the (quenched) distribution of the extended fringe tree
converges in probability: 
\begin{align}\label{qsin21}
\Law(\TXX_n\mid T_n) =\Law((\TXXX{i}_n)_{i\ge0}\mid T_n)
\pto \Law(\TXXoo)=\Law((\TXXXoo i)_{i\ge0})
,\end{align}
where the distribution of the limiting extended fringe tree $\TXXoo$ is
given by $\qsin(\TXXoo;t)$ in \eqref{qsin12}.

Conversely, if the extended fringe tree distribution converges
in the sense \eqref{qsin21}, and the limit $\kk$ in \eqref{qsin13} exists,
then the fringe tree distribution converges in the sense \eqref{qsin20},
and \eqref{qsin12} holds.

For full binary trees with $|T_n|\pto\infty$, 
we always have \eqref{qsin13} with $\kk=2$.
\end{lemma}
\begin{proof}
We have by \eqref{qsin7} and the assumption \eqref{qsin20}
\begin{align}\label{qsin11}
  \qsin(T_n;t\mid T_n)
=\esize{t}\frac{\P(\Tx_n=t\mid T_n)}{\P(\Tx_n=\TI\mid T_n)}
\pto
\esize{t}\frac{\P(\Txoo=t)}{\P(\Txoo=\TI)}
,\end{align}
which can be written as \eqref{qsin12}--\eqref{qsin13},
since \eqref{qsin20} as a special case also yields
\begin{align}
  \frac{\esize{T_n}}{|T_n|} =\P(\Tx_n=\TI\mid T_n)
\pto \P(\Tx_\infty=\TI).
\end{align}
Furthermore, \eqref{qsin12} is equivalent to \eqref{qsin21}  
by the discussion before \eqref{qsin5}.

The converse follows in the same way.

For full binary trees with $|T_n|\pto\infty$,
\eqref{full} yields $\kk=2$.
\end{proof}

Results for extended fringe trees thus follow rather trivially from results
for fringe trees $\Tx_n$, but we find it interesting to state also such
results explicitly below.

\section{Patricia tries}\label{SPat}

Recall that the random trie $\TR_n$ and its contraction the Patricia trie
$\hTR_n$ are defined using a parameter $p\in(0,1)$, which is kept fixed and
is omitted from the notation, see \refSS{SStries}.

We fix a full binary tree $t$ and consider $N_t(\hTR_n)$, 
the number of fringe trees in the  Patricia trie $\hTR_n$ that equal $t$.
Let $m:=\esize{t}$, the number of leaves in $t$.
The case $m=1$ is trivial with $t=\TI$ and $N_t(\hTR_n)=n$, so we assume
$m\ge2$.

As shown by \citet{Ischebeck},
asymptotic normality of $N_t(\hTR_n)$ follows by \eqref{od2} and a
straightforward application of results for tries in \cite{SJ347}, see below
for details, although some work is required to calculate the asymptotic mean
and variance. 

We first introduce some notation.
Let,
recalling the definition of $\cht$ in \refSS{SSNot},
\begin{align}\label{pit}
\pi_t :=  \P(\TR_{m}\in \cht).
\end{align}
(This is thus the probability that the trie $\TR_m$ compresses to $t$ and
has root degree $\neq1$.) 
This will be calculated in \refL{LPT} below.
Further, for $\gl>0$, 
let $\Tgl$ be the random trie constructed from a random number
$N_\gl\in\Po(\gl)$ random strings; thus $\Tgl$ has $N_\gl$ leaves.
The results in \cite{SJ347} and \cite{Ischebeck}
use heavily some functions defined in general
(assuming $\gf(\TI)=0$ as in our case) by
\cite[(3.16)--(3.18)]{SJ347}:
  \begin{align}
    \fE(\gl)&:=\E\gf(\Tgl),\label{fE0}
    \\
    \fV(\gl)&:=2\Cov\bigpar{\gf(\Tgl),\GF(\Tgl)}-\Var\gf(\Tgl),\label{fV0}
              \\
    \fC(\gl)&:=
\Cov\bigpar{\gf(\Tgl),N_\gl},
\label{fC0}
 \end{align}
and their Mellin transforms defined by (for $\sX=\sE,\sV,\sC$ and suitable $s$)
\begin{align}\label{mfx}
  \MfX(s):=\intoo \fX(\gl)\gl^{s-1}\dd\gl.
\end{align}
For $\sX=\sE,\sV,\sC$ define also the function $\psiX(x)$, $x\in\bbR$ by
\cite[(3.13)--(3.14)]{SJ347}:
\begin{romenumerate}
\item 
In the aperiodic case ($\log p/\log q\notin\bbQ$),
$\psiX$ is constant: for all $x$,
\begin{align}\label{psiX0}
\psiX(x):=\Mx{\fX}(-1).  
\end{align}
\item 
In the periodic case, 
$\psiX$ is a continuous
$d$-periodic function given by the Fourier series,
recalling $d=\dpp$ defined in \eqref{dp},
\begin{equation}\label{psiX}
  \psiX(x)=\sumkoooo  \Mx{\fX}\Bigpar{-1-\frac{2\pi k}{d}\ii}e^{2\pi\ii  kx/d}.
\end{equation}
(The Fourier series converges absolutely in our case by
\eqref{MfE}--\eqref{MfC} below, or by \cite[Theorem 3.1]{SJ347} and the
formulas for $f_X(\gl)$ below.)
\end{romenumerate}

Finally, let
\begin{align}\label{H}
  H:=-p\log p - q\log q,
\end{align}
the entropy of the bits in the random strings $\Xi\sss{i}$ used to define
$\hTR_n$. 
 
\begin{theorem}[Partly \citet{Ischebeck}]\label{TP}
Let $t$ be a full binary tree with $\esize{t}=m>1$.
Then $N_t(\hTR_n)$ is asymptotically normal as \ntoo:
\begin{align}
\frac{N_t(\hTn)-\E N_t(\hTn)}{\sqrt{\Var N_t(\hTn)}}
  &\dto N\xpar{0,1},\label{ttn}
\end{align}
with convergence of all moments.
We have  $\Var N_t(\hTn)=\Theta(n)$ as \ntoo, and
\begin{align}
  \label{ttmeann}
\E N_t(\hTn)&=n H\qw\psiE(\log n)+o(n),
\\
\Var N_t(\hTn)&
=n\bigpar{H^{-1}\psiV(\log n)-H^{-2}\psiC(\log n)^2 +o(1)}
\label{tt+hgs}
,\end{align}
where $\psiX$ is given by \eqref{psiX0}--\eqref{psiX} with
(for $\Re s>-m$)
\begin{align}\label{MfE}
\MfE(s) &= \pi_t\frac{\gG(m+s)}{m!},
 \\\label{MfV}
\MfV(s)&
=\frac{\pi_t}{m!}\gG(m+s)
-\frac{\pi_t^2}{m!^2}2^{-2m-s}\gG(2m+s)
\notag\\&\qquad
-2\frac{\pi_t^2}{m!^2}
\sumko(-1)^k\frac{\gG(2m+s+k)}{k!}\cdot
\frac{p^{m+k}+q^{m+k}}{1-(p^{m+k}+q^{m+k})}
,\\\label{MfC}
\MfC(s)&
=-s\MfE(s)
= -\pi_t \frac{s\gG(m+s)}{m!}.
\end{align}
In the aperiodic case,
\eqref{ttmeann}--\eqref{tt+hgs}
simplify to
\begin{align}
  \label{ttmeann0}
\E N_t(\hTn)/n&\to H\qw\frac{\pi_t}{m(m-1)},
\\
\Var N_t(\hTn)/n& \to
H^{-1}\Mx{\fV}(-1) -
\Bigpar{H\qw\frac{\pi_t}{m(m-1)}}^2
\label{tt+hgs0}
.\end{align}
\end{theorem}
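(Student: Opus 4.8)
The plan is to reduce everything to the additive-functional machinery for random tries developed in \cite{SJ347}, applied to the functional $\gf(T)=\indic{T\in\cht}$ and its additive version $\GF$ defined in \eqref{od0}--\eqref{od1}, since by \eqref{od2} we have $N_t(\hTn)=\GF(\TR_n)$. First I would check that $\gf$ satisfies the hypotheses needed in \cite{SJ347}: it is bounded (indeed $\{0,1\}$-valued), it vanishes on trees that are ``too small'' — precisely, $\gf(T)=0$ unless $\esize{T}=m$, so $\gf$ depends only on trees with exactly $m$ leaves and in particular $\gf(\TI)=0$ since $m\ge2$ — and a toll-function/support condition of the type required for the central limit theorem in \cite{SJ347} holds because $\gf$ is supported on finitely many trees. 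Granting this, the general CLT of \cite{SJ347} gives the asymptotic normality \eqref{ttn} with all moments, together with $\E\GF(\TR_n)=nH\qw\psiE(\log n)+o(n)$ and $\Var\GF(\TR_n)=n\bigpar{H\qw\psiV(\log n)-H\qww\psiC(\log n)^2+o(1)}$, where $\psiE,\psiV,\psiC$ are built from the Mellin transforms $\MfE,\MfV,\MfC$ of the Poissonised functions $\fE,\fV,\fC$ in \eqref{fE0}--\eqref{fC0}. The lower bound $\Var N_t(\hTn)=\Theta(n)$ (not merely $O(n)$) also follows from the corresponding non-degeneracy statement in \cite{SJ347}, once one checks the leading coefficient $H\qw\psiV-H\qww\psiC^2$ is not identically zero; since $\psiV,\psiC$ are almost-periodic with known Fourier coefficients, it suffices to verify the mean term $\MfV(-1)-\MfE(-1)^2\cdot(\text{const})$ is positive, which is clear because $\GF$ is a genuinely fluctuating count.

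The substantive computation is to evaluate the three Mellin transforms explicitly, i.e.\ to prove \eqref{MfE}--\eqref{MfC}. For $\MfE$: by definition $\fE(\gl)=\E\gf(\Tgl)=\P(\Tgl\in\cht)$, and since $\gf$ lives on trees with $m$ leaves, $\fE(\gl)=\P(N_\gl=m)\,\P(\TR_m\in\cht)=e^{-\gl}\gl^m/m!\cdot\pi_t$. Plugging into \eqref{mfx} and recognising the Gamma integral gives $\MfE(s)=\pi_t\gG(m+s)/m!$, valid for $\Re s>-m$. The identity $\MfC(s)=-s\MfE(s)$ in \eqref{MfC} should follow from a general relation in \cite{SJ347} between $\fC$ and $\fE$ (differentiation in $\gl$ / the Poisson size-bias relation $\fC(\gl)=\gl\fE'(\gl)$ or similar), or can be obtained directly: $\fC(\gl)=\Cov(\gf(\Tgl),N_\gl)$ and, because $\gf(\Tgl)$ is supported on $\{N_\gl=m\}$, a short computation gives $\fC(\gl)=(m-\gl)\fE(\gl)$, whose Mellin transform is $m\MfE(s)-\MfE(s+1)=m\MfE(s)-(m+s)\MfE(s)=-s\MfE(s)$ using the functional equation $\gG(m+s+1)=(m+s)\gG(m+s)$.

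The hard part will be $\MfV$, \eqref{MfV}. The starting point is \eqref{fV0}: $\fV(\gl)=2\Cov(\gf(\Tgl),\GF(\Tgl))-\Var\gf(\Tgl)$. The term $\Var\gf(\Tgl)=\fE(\gl)-\fE(\gl)^2$ contributes, after Mellin transform, $\MfE(s)$ minus the transform of $\fE(\gl)^2=\pi_t^2 e^{-2\gl}\gl^{2m}/m!^2$, which is $\pi_t^2 2^{-2m-s}\gG(2m+s)/m!^2$ — accounting for the first two lines of \eqref{MfV}. For the covariance term $\Cov(\gf(\Tgl),\GF(\Tgl))$ one expands $\GF(\Tgl)=\sum_{v}\gf(\Tgl^v)$ over all nodes; the root term gives back $\Var\gf$-type contributions, and the subtree terms require the standard trie recursion: conditioned on the numbers of strings going left and right (which are independent $\Po(p\gl)$ and $\Po(q\gl)$), the left and right fringes are independent copies of $\Tqq{p\gl}$ and $\Tqq{q\gl}$. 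This produces a renewal-type functional equation for the Poissonised covariance whose solution, via Mellin transform and the geometric series $\sum_{k\ge0}(p^{m+k}+q^{m+k})^{j}$, yields the infinite sum in \eqref{MfV} with the factor $(p^{m+k}+q^{m+k})/(1-(p^{m+k}+q^{m+k}))$; the alternating binomial-type coefficients $(-1)^k\gG(2m+s+k)/k!$ come from expanding $e^{-\gl}$ in the relevant integrals. I expect this bookkeeping — correctly separating the self-pair, root, and disjoint-pair contributions and matching them to \cite[Section 4.2]{SJ347}, where essentially the same computation is carried out for the count of fringe trees with a given number of leaves — to be the main obstacle; the analytic ingredients (Mellin inversion, the periodic/aperiodic dichotomy, absolute convergence of the Fourier series) are then routine and already packaged in \cite{SJ347}. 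Finally, the aperiodic specialisations \eqref{ttmeann0}--\eqref{tt+hgs0} follow by setting $x$-independence: $\psiE\equiv\MfE(-1)=\pi_t\gG(m-1)/m!=\pi_t/(m(m-1))$ and $\psiC\equiv\MfC(-1)=\MfE(-1)$, so the $\psiC^2$ term becomes $(H\qw\pi_t/(m(m-1)))^2$ and $\psiV\equiv\MfV(-1)$, giving \eqref{tt+hgs0}.
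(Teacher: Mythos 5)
Your overall strategy is the paper's: reduce to the additive functional $\GF$ with toll $\gf=\indic{\cdot\in\cht}$ via \eqref{od2}, invoke the CLT of \cite[Theorem 3.9]{SJ347}, and compute the three Mellin transforms. The computations of $\MfE$ and $\MfC$ are correct and identical to the paper's. However, there are genuine gaps in the remaining parts.

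The main one is $\MfV$. You propose to handle $\Cov(\gf(\Tgl),\GF(\Tgl))$ by setting up and solving a renewal-type functional equation, separating ``self-pair, root, and disjoint-pair contributions''; this is left entirely unexecuted, and it misses the structural observation that makes the computation short: if $T\in\cht$ then $T$ has exactly $m$ leaves and its root has outdegree $2$, so no \emph{proper} fringe subtree of $T$ can lie in $\cht$; hence $\gf(T)=1$ implies $\GF(T)=1$. Consequently $\E[\gf(\Tgl)\GF(\Tgl)]=\E\gf(\Tgl)$ exactly, so $\Cov(\gf(\Tgl),\GF(\Tgl))=\E\gf(\Tgl)\bigpar{1-\E\GF(\Tgl)}$ — all your ``disjoint-pair'' terms vanish and no functional equation is needed. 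One then only has to insert the explicit formula $\E\GF(\Tgl)=\sum_{\ga}\fE(P(\ga)\gl)$ (sum over nodes $\ga$ of the infinite binary tree, \cite[(2.25)]{SJ347}), take the Mellin transform, expand $(1+P(\ga))^{-2m-s}$ binomially, and use $\sum_{|\ga|=\ell}P(\ga)^{b}=(p^b+q^b)^\ell$ to get the geometric factor in \eqref{MfV}. Without the vanishing of the cross terms, your route would require controlling the joint law of $\gf(\Tgl)$ and $\gf(\Tgl^v)$ for all $v$, which is precisely what you have not done.

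Two smaller points. First, your justification of the hypotheses of \cite[Theorem 3.9]{SJ347} — that ``$\gf$ is supported on finitely many trees'' — is false: $\cht$ is infinite (arbitrarily long subdivided edges); what is true is that $\gf$ is supported on trees with exactly $m$ leaves, and the required monotone decomposition is obtained by writing $\gf=(\gf+\gf_{>m})-\gf_{>m}$ with $\gf_{>m}(T):=\indic{\esize{T}>m}$. Second, the lower bound $\Var N_t(\hTn)=\Omega(n)$ is asserted (``clear because $\GF$ is a genuinely fluctuating count'') rather than proved; the clean route is \cite[Lemma 3.14]{SJ347}, which only needs $\Var\GF(\Tn)>0$ for a single finite $n$, and even this check is slightly delicate: for $m=2$ one must take $n=4$, since $\GF(\TR_3)\equiv1$.
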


\begin{proof}
The asymptotic normality \eqref{ttn} is proved by 
\cite[Theorem 1 and Remark 4]{Ischebeck}, 
but the formulas \eqref{MfE}--\eqref{tt+hgs0} are
not calculated  explicitly there, so we will show them here. 
(The calculations are similar to the calculations in \cite{Ischebeck}  for
$N_m(\hTR_n)$; in particular,  recall the relation \eqref{tor11}.)

First, for completeness, we sketch (in our notation)
the proof of \eqref{ttn}--\eqref{tt+hgs} in
\cite{Ischebeck} based on \cite{SJ347}.
By \eqref{od2}, we have $N_t(\hTn)=\GF(\Tn)$, and 
we apply \cite[Theorem 3.9]{SJ347} to $\GF(\Tn)$.
We first verify the technical condition there that we can write
$\gf=\gf_+-\gf_-$ with bounded $\gf_\pm$ such that the corresponding
additive functionals $\Phi_\pm$ are increasing, in the sense that
$\Phi_\pm(T_1)\le\Phi_\pm(T_2)$ if $T_1$ is a subtree of $T_2$.
As in \cite[Section 4.3]{SJ347}, we let
$\gf_{>m}(T):=\indic{\esize{T}>m}$, 
and it is easily seen that $\gf_+:=\gf+\gf_{>m}$ and $\gf_-:=\gf_{>m}$
satisfy the condition.
Hence \cite[Theorem 3.9]{SJ347} applies.
Furthermore, $\gf(\Tn)=0$ for $n>m$, and it is easy to see that
there exists $n$ such that $\Var \GF(\Tn)>0$ (\ie, $\GF(\Tn)$ is not constant);
we may for example take $n=m+1$ if $m\ge 3$ and $n=4$ for $m=2$.
Hence also \cite[Lemma 3.14]{SJ347} applies, which shows that
$\Var N_t(\hTn)=\Var\GF(\Tn)=\gO(n)$ as \ntoo.
Consequently, \eqref{ttn} (with convergence of moments)
follows by \cite[Theorem 3.9(iv)]{SJ347}.
Furthermore, the moment convergence in
\cite[Theorem 3.9(ii)]{SJ347} implies
\eqref{tt+hgs}. In particular, 
$\Var\GF(\Tn)=O(n)$, and thus
$\Var\GF(\Tn)=\Theta(n)$.
The asymtotics \eqref{ttmeann} follows from \cite[Theorem 3.9(v)]{SJ347}.

It remains to find the Mellin transforms $\MfX$ (which yield $\psiX(x)$
by \eqref{psiX0}--\eqref{psiX}).
First,
since $\TR_n\in\cht$ is possible only when $n=\esize{t}=m$,
it follows by \eqref{od0} and \eqref{pit} that 
\begin{align}\label{pat2}
  \fE(\gl) 
:=\E \gf(\Tgl)
=\P(\Tgl\in\cht)
= \P(N_\gl=m)\pi_t
=\Po(\gl;m)\pi_t
=\pi_t\frac{\gl^m}{m!}e^{-\gl}
.\end{align}
This, or simpler \cite[Lemma 3.16]{SJ347}, yields the Mellin transform
\begin{align}\label{pat3}
  \MfE(s) :=\intoo \fE(\gl)\gl^{s-1}\dd\gl = \frac{\gG(m+s)}{m!}\pi_t,
\end{align}
verifying \eqref{MfE}.
By \cite[Lemma 3.6]{SJ347},  we have
$\MfC(s)=-s\MfE(s)$, showing \eqref{MfC}.
(We also obtain  $\fC(\gl)=\gl\fE'(\gl)=(m-\gl)\fE(\gl)$.)

To find the more complicated $\fV$, note first that if $\gf(T)=1$, \ie,
$T\in\cht$, then no fringe tree $T^v$ except $T^o=T$ (where $o$ is the root)
belongs to $\cht$. Hence, if $\gf(T)=1$, then $\Phi(T)=1$, and consequently,
\begin{align}\label{pat4}
  \Cov\bigpar{\gf(\Tgl),\Phi(\Tgl)}
=
\E \gf(\Tgl) - \E \gf(\Tgl) \E\Phi(\Tgl)
.\end{align}
Similarly, $\Var\gf(\Tgl)=\E\gf(\Tgl)-(\E\gf(\Tgl))^2$,
and thus \eqref{fV0} yields, using  \eqref{pat4},
\begin{align}\label{pat6}
  \fV(\gl) = \E \gf(\Tgl) - 2\E \gf(\Tgl) \E\Phi(\Tgl) + (\E\gf(\Tgl))^2.
\end{align}

Let $\cAx:=\bigcup_{n=0}^\infty\setoi^n$ be the set of finite strings of \setoi.
If $\ga=\ga_1\dotsm\ga_n\in\cAx$, define $P(\ga)$
as the probability that the random
string $\Xi\sss1$ begins with $\ga$, \ie, 
\begin{align}\label{pga}
  P(\ga):=\prod_{i=1}^{|\ga|} q^{1-\ga_i}p^{\ga_i},
\end{align}
where $|\ga|\ge0$ is the length of $\ga$.
We may regard the strings $\ga\in\cAx$ as the nodes in the infinite binary
tree, and, using again $\gf(\TI)=0$, 
it follows (see \cite[(2.25)]{SJ347}) that, 
using also 
\eqref{pat2},
\begin{align}\label{pat5}
  \E\Phi(\Tgl) &
= \sum_{\ga\in\cAx}\E\gf\bigpar{\Tqq{P(\ga)\gl}}
= \sum_{\ga\in\cAx}\fE\bigpar{P(\ga)\gl}
=\pi_t \sum_{\ga\in\cAx}\Po\bigpar{P(\ga)\gl;m}
.\end{align}
Let $\tsumaxx$ denote the sum over all $\ga\in\cAx$ with $|\ga|\ge1$,
\ie, over all strings $\ga$ except the empty string.
Then \eqref{pat6} and \eqref{pat5} yield, using \eqref{pat2} again and
\eqref{po}, 
\begin{align}\label{pat7}
\fV(\gl)&
=\pi_t\Po(\gl;m)
-2\pi_t^2\Po(\gl;m) \sum_{\ga\in\cAx}\Po\bigpar{P(\ga)\gl;m}
+\pi_t^2\Po(\gl;m)^2
\notag\\&
=\pi_t \frac{\gl^m}{m!}e^{-\gl}
-2\frac{\pi_t^2}{m!^2}\sum_{\ga\in\cAx}P(\ga)^m\gl^{2m}e^{-(1+P(\ga))\gl}
+ \pi_t^2 \frac{\gl^{2m}}{m!^2}e^{-2\gl}
\notag\\&
=\pi_t \frac{\gl^m}{m!}e^{-\gl}
-2\frac{\pi_t^2}{m!^2}\sumaxx P(\ga)^m\gl^{2m}e^{-(1+P(\ga))\gl}
- \pi_t^2 \frac{\gl^{2m}}{m!^2}e^{-2\gl}
.\end{align}
The Mellin transform is thus, for $\Re s>-m$,
by a simple calculation,
\begin{align}\label{pat8}
\MfV(s)&
=\frac{\pi_t}{m!}\gG(m+s)
-2\frac{\pi_t^2}{m!^2}\sumaxx P(\ga)^m(1+P(\ga))^{-2m-s}\gG(2m+s)
\notag\\&\qquad
- \frac{\pi_t^2 }{m!^2}2^{-2m-s}\gG(2m+s)
.\end{align}
By a binomial expansion we have, with absolutely convergent sums, \eg{}
by \eqref{paa} below,
\begin{align}\label{pat10}
  \sumaxx P(\ga)^m(1+P(\ga))^{-2m-s}&
= \sumaxx P(\ga)^m\sumko \frac{(-1)^k}{k!}\frac{\gG(2m+s+k)}{\gG(2m+s)}
P(\ga)^k
\notag\\&
=\sumko\frac{(-1)^k}{k!}\frac{\gG(2m+s+k)}{\gG(2m+s)}\sumaxx P(\ga)^{m+k}
.\end{align}
For any exponent $b$ and $\ell\ge1$ we have from the definition \eqref{pga},
letting $j$ be the number of 1s in $\ga$,
\begin{align}\label{paa}
  \sum_{\ga:|\ga|=\ell}P(\ga)^{b}
=\sum_{j=0}^\ell\binom\ell{j}\bigpar{p^jq^{\ell-j}}^b
=(p^b+q^b)^\ell.
\end{align}
Hence, summing over $\ell\ge1$, we obtain from \eqref{pat10}
\begin{align}\label{pat11}
\sumaxx P(\ga)^m(1+P(\ga))^{-2m-s}
=\sumko\frac{(-1)^k}{k!}\frac{\gG(2m+s+k)}{\gG(2m+s)} 
\frac{p^{m+k}+q^{m+k}}{1-(p^{m+k}+q^{m+k})}
.\end{align}
Finally, we obtain \eqref{MfV} from \eqref{pat8} and \eqref{pat11}.

In the aperiodic case, \eqref{psiX0} holds, and
thus \eqref{ttmeann}--\eqref{tt+hgs} yield
\eqref{ttmeann0}--\eqref{tt+hgs0}, using \eqref{MfE} and \eqref{MfC}.
\end{proof}

\begin{remark}\label{Rsqrt}
We do not claim that the error term $o(n)$ in \eqref{ttmeann} is $o(\sqrt{n})$
so that $\E N_t(\hTn)$ may be replaced by $H\qw\psiE(\log n)$ in
\eqref{ttn}.
It is known that in the corresponding results for the size of a random trie,
this is in general \emph{not} true, see \cite{FlajoletRV} and 
\cite[Appendix C]{SJ347}. 
We leave it as an open problem whether the same may happen here too.
\end{remark}

\begin{remark}\label{Rmulti}
\refT{TP} extends to multivariate limits for several full binary trees $t_i$
by the Cram{\'e}r--Wold device, \ie, by considering linear combinations of 
different $N_{t_i}(\hTn)$, \cf{} \cite[Remark 3.10]{SJ347}.
In general, 
also with trees $t_i$ of different sizes, 
asymptotic covariances can be found by
calculations similar to the ones above for the variance; we omit the details.
\end{remark}

It remains to calculate $\pi_t$. We have the following result, which is
closely related to \cite[Lemma 1]{Ischebeck}.
\begin{lemma}\label{LPT}
Let $t$ be a full binary tree with $\esize{t}=m\ge1$.
Then, 
\begin{align}\label{lpt}
\pi_t := \P(\TR_m\in\cht)= 
m!\, q^{\LPL(t)}p^{\RPL(t)}
\prod_{k=2}^{m-1}\bigpar{1-\xpar{q^{k}+p^{k}}}^{-\nu_k(t)},
\end{align}
where $\nu_k(t)$ is the number of nodes $v\in t$ such that the 
fringe tree $t^v$ has leaf size $\esize{t^v}=k$.
\end{lemma}

\begin{proof}[Sketch of proof]
\citet[Lemma 1]{Ischebeck} calculates the closely related 
probability
$\P(\hTR_m=t)$, i.e., the probability that $\TR_m$ compresses to $t$. 
(The only difference, apart from notation, is that the formula for 
$\P(\hTR_m=t)$ includes also $k=m$ in the product, which comes from
allowing $v$ to be the root in the argument below.)
The same argument applies here, so we will 
be brief. First, for a binary tree $t'$ with $m$ leaves 
such that no leaf has a parent with
outdegree 1,
we have $\TR_m=t'$ if and only if the $m$ random strings
$\Xi\sss1,\dots,\Xi\sss m$, taken in some order, have initial segments that
correspond to the $m$ leaves of $t'$ in the obvious way, 
and it follows easily that
\begin{align}\label{lt}
  \P(\TR_m=t')= m!\, q^{\LPL(t')}p^{\RPL(t')}. 
\end{align}
We have $\hTR_m\in\cht$ when $\TR_m$ is a tree $t'$ that can be obtained from   
$t$ by inserting paths of arbitrary lengths under each internal node 
$v\in t$ except the root. When summing the probabilities \eqref{lt} over all
$t'\in\cht$, these paths contribute for each $v$ with $\esize{t^v}=k\in[2,m-1]$
a factor 
\begin{align}
  \sum_{\ell=0}^\infty (p^k+q^k)^\ell 
= \frac{1}{1-(p^k+q^k)}
\end{align}
and the result follows.
\end{proof}

The asymptotic normality of $N_t(\hTn)$ yields corresponding results for the
distributions of fringe trees and extended fringe trees.
We note first a simple corollary.
\begin{corollary}\label{CA}
  Let $t$ be a full binary tree with $\esize{t}=m>1$.
Then
\begin{align}\label{ca1}
  N_t(\hTn)/n = \E N_t(\hTn)/n + \op(1) =H\qw\psiE(\log n) +\op(1),
\end{align}
with the periodic function $\psiE(t)$ as in \refT{TP}.
In particular, in the aperiodic case,
\begin{align}\label{ca2}
  N_t(\hTn)/n \pto  \frac{\pi_t}{m(m-1)H}.
\end{align}
\end{corollary}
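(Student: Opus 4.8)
The plan is to read everything off \refT{TP}, which already contains all the hard work (asymptotic normality, the order of the variance, and the mean asymptotics). First I would turn the variance bound into a weak law. Since \refT{TP} gives $\Var N_t(\hTn)=\Theta(n)$, in particular $\Var N_t(\hTn)=O(n)$, Chebyshev's inequality yields, for every $\eps>0$,
\[
\P\bigpar{\abs{N_t(\hTn)-\E N_t(\hTn)}>\eps n}\le\frac{\Var N_t(\hTn)}{\eps^2 n^2}=O(n\qw)\to0,
\]
so $\bigpar{N_t(\hTn)-\E N_t(\hTn)}/n\pto0$; this is the first equality in \eqref{ca1}. The second equality in \eqref{ca1} is then just a restatement of the mean asymptotics \eqref{ttmeann}, which gives $\E N_t(\hTn)/n=H\qw\psiE(\log n)+o(1)$, and $o(1)$ is in particular $\op(1)$. (Alternatively, one could quote the normality \eqref{ttn} together with $\Var N_t(\hTn)=O(n)$ to get the same conclusion, but Chebyshev is the cleaner route.)

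For the aperiodic case I would simply evaluate the now-constant function $\psiE$. By \eqref{psiX0} and \eqref{MfE}, $\psiE(x)\equiv\MfE(-1)=\pi_t\,\gG(m-1)/m!$; this uses that $-1>-m$, which holds because $m>1$. Since $m\ge2$ we have $\gG(m-1)=(m-2)!$, so $\MfE(-1)=\pi_t\,(m-2)!/m!=\pi_t/\bigpar{m(m-1)}$. Hence $H\qw\psiE(\log n)=\pi_t/\bigpar{m(m-1)H}$ is a constant and \eqref{ca1} immediately specializes to \eqref{ca2}.

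There is essentially no obstacle here beyond bookkeeping: the substantive content is entirely inside \refT{TP}. The only computation is the Gamma-function evaluation $\gG(m-1)/m!=1/(m(m-1))$, which is valid precisely because the hypothesis $m>1$ keeps $\MfE$ evaluated inside its region of analyticity $\Re s>-m$.
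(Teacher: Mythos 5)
Your argument is correct and is essentially the paper's own proof: Chebyshev's inequality applied to $\Var N_t(\hTn)=O(n)$ gives the first equality in \eqref{ca1}, the second is \eqref{ttmeann}, and the aperiodic case follows by evaluating $\psiE\equiv\MfE(-1)=\pi_t/(m(m-1))$, which is exactly \eqref{ttmeann0}. The only cosmetic difference is that you re-derive $\MfE(-1)$ from \eqref{MfE} rather than quoting \eqref{ttmeann0} directly.
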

\begin{proof}
  The first equality in \eqref{ca1} follows from  $\Var N_t(\hTn)=O(n)$
by Chebyshev's inequality,
and the second is \eqref{ttmeann}.

In the aperiodic case, we use \eqref{ttmeann0} instead of \eqref{ttmeann}
and obtain \eqref{ca2}.
\end{proof}

The size of the random trie $\Tn$ 
shows oscillations in the periodic case, see \eg{}
\cite{
KnuthIII, 
Mahmoud, 
FlajoletRV, 
FHZ2010, 
SJ242, 
FHZ2014, 
JS-Analytic, 
SJ347}.
However, the Patricia trie
$\hTn$ is a full binary tree with $n$ leaves, and thus has a fixed size 
$|\hTn|=2n-1$. (This is special to the binary case considered here.)
Hence we obtain from \eqref{tor2} and \refC{CA} immediately the following
for the random fringe tree.
We state results both conditioned on the tree $\hTn$ 
and unconditioned (i.e., results of {quenched} and {annealed}
type, repectively.)

\begin{corollary}\label{CB}
  Let $t$ be a full binary tree with $\esize{t}=m>1$.
Then
\begin{align}\label{cb1q}
\P(\hTnx=t\mid\hTn)  &= \frac1{2H}\psiE(\log n) +\op(1),
\\ \label{cb1a}
\P(\hTnx=t)  &= \frac1{2H}\psiE(\log n) +o(1),
\end{align}
with the periodic function $\psiE(t)$ as in \refT{TP}.
In particular, in the aperiodic case,
\begin{align}\label{cb2q}
\P(\hTnx=t\mid\hTn)  &\pto \frac{\pi_t}{2m(m-1)H},
\\\label{cb2a}
\P(\hTnx=t)  &\tend \frac{\pi_t}{2m(m-1)H}.
\end{align}

Furthermore, if\/ $|t|=1$, \ie, $t=\TI$, then, for any $p$,
\begin{align}\label{cb3}
\P(\hTnx=t\mid\hTn) & =
\P(\hTnx=t) =\frac{n}{2n-1} \to \frac12.
\end{align}
\end{corollary}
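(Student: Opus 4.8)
The plan is to read everything off directly from the relations between the random fringe tree and the fringe counts, together with \refC{CA} and the fact---stressed just above---that the Patricia trie $\hTn$ is a full binary tree with exactly $n$ leaves, so that $\abs{\hTn}=2n-1$ is \emph{deterministic} (not merely concentrated). This determinism is precisely what removes the oscillation difficulties afflicting the uncompressed trie and makes the corollary essentially a one-line consequence of \refC{CA}.

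For the quenched statement \eqref{cb1q}, conditioning on $\hTn$ in \eqref{tor3} gives $\P(\hTnx=t\mid\hTn)=N_t(\hTn)/\abs{\hTn}=N_t(\hTn)/(2n-1)$. I would write this as $\tfrac{n}{2n-1}\cdot\tfrac{N_t(\hTn)}{n}$ and substitute \refC{CA}; since $\tfrac{n}{2n-1}=\tfrac12+O(n\qw)$ and $\psiE$ is bounded (it equals the constant $\MfE(-1)$ in the aperiodic case and is a continuous $d$-periodic function otherwise), the product equals $\tfrac1{2H}\psiE(\log n)+\op(1)$, which is \eqref{cb1q}. For the annealed statement \eqref{cb1a}, \eqref{tor2} gives $\P(\hTnx=t)=\E N_t(\hTn)/(2n-1)$, and inserting \eqref{ttmeann}, namely $\E N_t(\hTn)=nH\qw\psiE(\log n)+o(n)$, and dividing by $2n-1$ yields $\tfrac1{2H}\psiE(\log n)+o(1)$.

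For the aperiodic specialisations \eqref{cb2q}--\eqref{cb2a} I would either plug the constant value $\psiE(\log n)=\MfE(-1)=\pi_t/\bigpar{m(m-1)}$ (using \eqref{MfE} and $\gG(m-1)/m!=1/\bigpar{m(m-1)}$) into \eqref{cb1q}--\eqref{cb1a}, or, more directly, combine $\P(\hTnx=t\mid\hTn)=\tfrac{n}{2n-1}\cdot\tfrac{N_t(\hTn)}{n}$ with \eqref{ca2}. Finally, for \eqref{cb3}, when $t=\TI$ the count $N_{\TI}(\hTn)$ is simply the number of leaves of $\hTn$, which is exactly $n$; since also $\abs{\hTn}=2n-1$, both \eqref{tor3} and \eqref{tor2} give the deterministic value $n/(2n-1)\to\tfrac12$.

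There is no genuinely hard step here; the only point requiring care is the bookkeeping with the factor $\tfrac{n}{2n-1}$ and the boundedness of $\psiE$, so that the error terms really remain $\op(1)$ respectively $o(1)$ after the rescaling---everything else is direct substitution into results already in hand.
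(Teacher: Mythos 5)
Your proof is correct and follows essentially the same route as the paper: the key point in both is that $|\hTn|=2n-1$ is deterministic, so the quenched statements reduce immediately to \refC{CA} and the case $t=\TI$ is trivial. The only cosmetic difference is that for the annealed version \eqref{cb1a} the paper takes the expectation of the quenched statement \eqref{cb1q}, noting that the $\op(1)$ error is bounded so dominated convergence applies, whereas you substitute the mean asymptotics \eqref{ttmeann} directly into \eqref{tor2}; both are equally valid one-line arguments.
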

\begin{proof}
  The quenched versions \eqref{cb1q} and \eqref{cb2q} are the same as
  \eqref{ca1} and \eqref{ca2} by \eqref{tor2} conditioned on $\hTn$,
recalling $|\hTn|=2n-1$.
The annealed versions follow by taking expectations; note that the error
term $\op(1)$ in \eqref{cb1q} is bounded so dominated convergence applies
and shows that its expectation is $o(1)$.

Finally, \eqref{cb3} is trivial (but included for completeness).
\end{proof}

For the random extended fringe tree $\hTnxx$, we 
obtain similarly, for the probabilities $q(\hTn;t)$ defined in \refSS{SSext}.
\begin{corollary}\label{CC}
  Let $t$ be a full binary tree with $\esize{t}>1$.
Then
\begin{align}\label{cc1q}
\qsin(\hTn;t\mid\hTn)  &= \esize{t} H\qw\psiE(\log n) +\op(1),
\\ \label{cc1a}
 \qsin(\hTn;t)  &= \esize{t}H\qw\psiE(\log n) +o(1),
\end{align}
with the periodic function $\psiE(t)$ as in \refT{TP}.
In particular, in the aperiodic case,
\begin{align}\label{cc2q}
\qsin(\hTn;t\mid\hTn)  &\pto \frac{\pi_t}{(\esize{t}-1)H},
\\\label{cc2a}
 q(\hTn;t)  &\tend \frac{\pi_t}{(\esize{t}-1)H}.
\end{align}

For $\esize{t}=1$, $q(\hTn)=1$ by definition.
\end{corollary}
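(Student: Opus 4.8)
The plan is to reduce everything to \refC{CA} via the identities of \refSS{SSext} that express $\qsin(T;t)$ through the fringe tree count $N_t(T)$; what makes the Patricia case clean is that $\hTn$ has a \emph{deterministic} number of leaves, $\esize{\hTn}=n$. So, fixing $t$ with $\esize{t}=m>1$, I would apply \eqref{qsin2} to $T=\hTn$ and use $\esize{\hTn}=n$ to get the conditional (quenched) identity
\begin{align*}
  \qsin(\hTn;t\mid\hTn)=\frac{N_t(\hTn)\,\esize{t}}{n}=\esize{t}\cdot\frac{N_t(\hTn)}{n}.
\end{align*}
Then I substitute the quenched asymptotics $N_t(\hTn)/n=H\qw\psiE(\log n)+\op(1)$ from \eqref{ca1} to obtain \eqref{cc1q}; in the aperiodic case I substitute instead $N_t(\hTn)/n\pto \pi_t/(m(m-1)H)$ from \eqref{ca2} and multiply by $m=\esize{t}$ to get \eqref{cc2q}, noting $m\cdot\pi_t/(m(m-1)H)=\pi_t/((\esize{t}-1)H)$.

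For the annealed statements I take expectations, using $\qsin(\hTn;t)=\E\,\qsin(\hTn;t\mid\hTn)$ from \eqref{qsin00}. Since $\qsin(\hTn;t\mid\hTn)$ is a probability it lies in $[0,1]$, so the error $\op(1)$ in \eqref{cc1q} is uniformly bounded; as $\psiE(\log n)$ is deterministic, dominated convergence gives $\E\,\op(1)=o(1)$ and hence \eqref{cc1a}. Similarly \eqref{cc2a} follows from \eqref{cc2q} by bounded convergence, or directly from $\qsin(\hTn;t)=\esize{t}\,\E[N_t(\hTn)/n]$ together with \eqref{ttmeann0}. Finally, the case $\esize{t}=1$ is $t=\TI$, for which $\qsin(\hTn;\TI)=1$ holds trivially for every tree, as noted after \eqref{qsin1}; this is the last line of the statement.

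I do not expect a genuine obstacle: the corollary is a routine consequence of \refC{CA} in exactly the way \refC{CB} was a consequence of \eqref{ca1}--\eqref{ca2}. The only mildly delicate step is the passage from the quenched to the annealed statement, which works precisely because $\qsin(\cdot;t\mid\cdot)\le 1$ uniformly and because the denominator $\esize{\hTn}=n$ carries no randomness — the latter being exactly the feature that is unavailable for the compressed BST, where $\esize{\hBST_n}$ itself fluctuates and a ratio argument would require more care.
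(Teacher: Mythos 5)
Your proof is correct and matches the paper's: the paper's proof of Corollary~\ref{CC} is exactly "follows as Corollary~\ref{CB} from Corollary~\ref{CA}, using \eqref{qsin2}--\eqref{qsin3} and $\esize{\hTn}=n$", and the quenched-to-annealed passage via boundedness and dominated convergence is the same device used in the proof of Corollary~\ref{CB}. Nothing is missing.
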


\begin{proof}
  Follows as \refC{CB} from \refC{CA}, now using
  \eqref{qsin2}--\eqref{qsin3} and recalling $\esize{\hTn}=n$.
\end{proof}

\begin{remark}\label{R+normal}
  We have here only stated first order results for the distributions of
  fringe trees and extended fringe trees. We similarly obtain from \refT{TP}
  also asymptotic normality of these distributions in the quenched version, 
meaning asymptotic normality of the conditional probabilities above.
\end{remark}

\begin{remark}\label{Rosc}
  \refCs{CB} and \ref{CC} show that in the periodic case there is
oscillation and no  limit distribution, although suitable subsequences
converge in distribution.
It is well-known that for some related functionals for tries, the
oscillations are numerically very small; this is true here too when $m$ is
small, but not for large $m$.
Consider the symmetric case $p=q=\frac12$; then
$\dpp=\log2$. 
(In other periodic cases, $\dpp$ is smaller and the oscillations are 
substantially smaller than in the symmetric case, but they still become
large for large $m$.)
In the Fourier series \eqref{psiX} for $\fE$, we have
by \eqref{MfE} the constant term $\MfE(-1)=\pi_t/m(m-1)$, and
if we normalize by this term,
for the term $k=1$ we have
\begin{align}\label{freja}
  \frac{\MfE(-1-\chio\ii)}{\MfE(-1)}
=\frac{\gG\xpar{m-1-\frac{2\pi}{\log 2}\ii}}{\gG(m-1)}.
\end{align}
For $m=2$, this ratio has absolute value
$|\gG(1-\frac{2\pi}{\log 2}\ii)|\doteq 4.9 \cdot 10^{-6}$,
and higher Fourier coefficients are much smaller.
Hence, the oscillations are in this case hardly of practical importance.
However, the absolute value of the ratio increases as $m$ increases:
for $m=3$ it is $\doteq 4.5 \cdot 10^{-5}$, for $m=4$ it is 
$\doteq2.1\cdot10^{-4}$ and for $m=100$ it is $\doteq0.66$;
in fact, the absolute value of the ratio
converges to 1 as \mtoo{} (see
\cite[5.11.12]{NIST}), and the same holds for every Fourier coefficient.
Hence we cannot always ignore the oscillations.

More precisely, still taking $p=q=\frac12$,
the normalized $(\log2)$-periodic
function $\psio(x):=\psiE(x)/\MfE(-1)$ is non-negative by
\eqref{ttmeann} and has by \eqref{psiX} and \eqref{pat3} Fourier coefficients
\begin{align}\label{freja2}
\hpsio(k)=  \frac{\MfE(-1-k\chio\ii)}{\MfE(-1)}
=\frac{\gG\xpar{m-1-\frac{2\pi k}{\log 2}\ii}}{\gG(m-1)}.
\end{align}
Let $\frax{x}:=x-\floor{x}$  
denote the fractional part of a real number $x$, and 
suppose that $m\to\infty$ along a subsequence such that 
$\frax{\lg m}=\frax{(\log m)/d}\to u\in\oi$.
(Recall that $d=\log2$.)
It then follows from \eqref{freja2} and 
\cite[5.11.12]{NIST} that, for any $k\in\bbZ$,
\begin{align}
\hpsio(k)\sim 
m^{-(2\pi k/\log 2)\ii}  
=e^{-2\pi k (\lg m)\ii}  
\to e^{-2\pi k u\ii}
=\widehat{\gd_{du}}(k),
\end{align}
where $\gd_{du}$ is a point mass at $du$.
Hence, by the continuity theorem,
the function $\psio(x)$ converges weakly 
(as a measure on the circle $\bbR/d\bbZ$) to $\gd_{du}$,
which roughly means that for large $m$, $\psio(x)$ is concentrated 
at $x$ close to $du\approx d\frax{\lg m}$.
Hence, still roughly, $\psi_E(\log n)$ in \eqref{ttmeann}
is large when $\frax{\lg n} =\frax{\log n/d}\approx \frax{\lg m}$, but small otherwise.
This should not be surprising. 
For a large $n$, in the first generations of  the construction of the trie
from $n$ strings in \refSS{SStries}, by the law of large numbers
almost exatly half of the strings are passed to the left child and half to
the right child.
Consequently, there will be many fringe trees in $\Tn$, and thus in $\hTn$,
of leaf size $\approx 2^{-j}n$ for integers $j$, but few for intermediate sizes,
until we get down to small sizes $m$. Thus, for a fixed large $m$, we expect
many fringe trees of leaf size $m$ when $n/m$ is close to a power of 2, which is
the same as $\frax{\lg n}\approx\frax{\lg m}$.
\end{remark}

\begin{remark}\label{Rcancel}
If we consider the ratio between the probabilities for two given trees $t$
of the same leaf
size, then there are no oscillations even in the periodic case,
since the oscillations  in \refCs{CB} and \ref{CC} for the two trees cancel
by \eqref{psiX} and \eqref{MfE}.
\end{remark}

\section{Extended binary search trees}\label{SEBST}

In this section we study fringe trees of the extended BST $\eBST_n$;
in the next section we study the more complicated case of the 
compressed BST $\hBST_n$.

As said in the introduction, \citet{Aldous-fringe} shows that the random
fringe trees $\BSTx_n$ converge in distribution to some limiting random
fringe tree $\BSTxoo$ as \ntoo, and
\citet{Devroye1991,Devroye2002} show asymptotic normality of the subtree
counts $N_t(\BST_n)$.
Furthermore,
it is shown in
\cite{Aldous-fringe} 
(and in \cite[Theorem 2]{Devroye2002}) that
the distribution of the limiting random fringe tree $\BSTxoo$ 
equals the mixture $\sumk\frac{2}{(k+1)(k+2)}\Law(\BST_k)$,
\ie,
for any set $\fA\subseteq\fT$,
\begin{align}\label{ald}
  \P\bigpar{\BSTxoo\in\fA} = \sumk \frac{2}{(k+1)(k+2)}\P\bigpar{\BST_k\in\fA}.
\end{align}

It is straightforward to use this to obtain the corresponding results for
the extended BST. 
For a full binary tree $t$ with $\esize{t}>1$, and thus $\isize{t}>0$,
let $t\internal$ denote the subtree ot $t$ consisting of the internal
nodes. Note that then $t=\overline{t\internal}$.

\begin{theorem}  \label{TEBST}
The extended BST has  a limiting random fringe tree $\eBSTxoo$
with distribution given by, 
for every full binary tree $t$ with $\isize{t}=k>0$,
\begin{align}\label{teb0}
  \P\bigpar{\eBSTxoo=t}
=
\tfrac12\P(\BSTxoo=t\internal)
=
\frac{1}{(k+1)(k+2)}\P(\BST_k=t\internal)
,\end{align}
and (for the trivial case $\isize{t}=0$)
\begin{align}\label{teb3}
  \P\bigpar{\eBSTxoo=\TI}
=\tfrac12.
\end{align}

Moreover, 
for $t=\TI$ we trivially have $N_\TI(\eBST_n)=n+1$,
and 
for any full binary tree $t$ with $\esize{t}>1$, 
$N_t(\eBST_n)$ is asymptotically normal:
there exist constants $\egb_t>0$ and $\egam_t>0$ such that, as \ntoo,
\begin{align}\label{tbst1}
  \frac{N_t(\eBST_n)-n\egb_t}{\sqrt n}\dto N(0,\egamm_t),
\end{align}
with convergence of mean and variance,
where
\begin{align}\label{cs2}
\egb_t
= 2\P\bigpar{\eBSTxoo=t}
= \P\bigpar{\BSTxoo=t\internal}
=\frac{2}{(k+1)(k+2)}\P(\BST_k=t\internal).
\end{align}
In particular, for every full binary tree $t$ (with $\egb_\TI:=1$),
\begin{align}\label{tbst2}
  N_t(\eBST_n)/n\pto \egb_t.
\end{align}
Consequently, the fringe tree distribution converges to the limit $\eBSTxoo$
both in quenched and annealed sense: For every full binary tree $t$,
\begin{align}\label{tbst3}
\P(\eBSTx_n=t\mid\eBST_n)&=\frac{N_t(\eBST_n)}{|\eBST_n|}
\pto \P(\eBSTxoo=t),
\\\label{tbst4}
\P(\eBSTx_n=t)&=\E\frac{N_t(\eBST_n)}{|\eBST_n|}
\to \P(\eBSTxoo=t)
.\end{align}
\end{theorem}

\begin{proof}
  First, $\eBST_n$ has $n$ internal nodes and $n+1$ leaves,
and thus $2n+1$ nodes; hence 
$N_\TI(\eBST_n)=n+1$ and
\begin{align}\label{eb1}
\P(\eBSTx_n=\TI)
=
\P(\eBSTx_n=\TI\mid\eBST_n)
=\frac{n+1}{2n+1}\to\frac12,
\end{align}
which yields 
\eqref{tbst3}--\eqref{tbst4} in this trivial case with
\eqref{teb3}.

Furthermore, if the fringe tree in $\BST_n$ of a node $v\in\BST_n$ 
is $\BST_n^v=t_1$, say,
then the fringe tree $\eBST_n^v$ in $\eBST_n$ of the same node is the
extended binary tree $\et_1$; i.e., $\eBST_n^v=\overline{\BST_n^v}$.
Consequently,
for any binary tree $t_1$,
$N_{\et_1}(\eBST_n)=N_{t_1}(\BST_n)$.
If we here replace $t_1$ by $t\internal$ for a full binary tree $t$ with
$\esize{t}>1$, and thus $\isize{t}>0$, we
obtain
\begin{align}\label{cs1}
  N_{t}(\eBST_n)
=  N_{\overline{t\internal}}(\eBST_n)
=N_{t\internal}(\BST_n)
.\end{align}
The asymptotic normality \eqref{tbst1} follows immediately from \eqref{cs1}
and the asymptotic normality of $N_{t\internal}(\BST_n)$ proved by 
\citet{Devroye1991,Devroye2002}; the fact that the 
variance is of order $\Theta(n)$ and thus $\egam_t>0$ is 
implicit in \cite[Theorem 5 and its proof]{Devroye2002},
and stated explicitly in \cite[Theorem 1.22]{SJ296}.

The convergence in probability \eqref{tbst2} follows immediately from
\eqref{tbst1}. Furthermore, \eqref{cs1} and \eqref{ald} yield, if
$\isize{t}=k>0$, 
\begin{align}\label{cs4}
 \frac{N_t(\eBST_n)}n=\frac{N_\tint(\BST_n)}n
\pto \P(\BSTxoo=\tint)
=\frac{2}{(k+1)(k+2)}\P(\BST_k=\tint),
\end{align}
and thus
\begin{align}\label{cs5}
 \frac{N_t(\eBST_n)}{|\eBST_n|}
=
 \frac{N_t(\eBST_n)}{2n+1}
\pto\tfrac12 \P(\BSTxoo=\tint)
=\frac{1}{(k+1)(k+2)}\P(\BST_k=\tint).
\end{align}
This yields \eqref{tbst3} with \eqref{teb0} when 
when $\esize{t}>1$;
the trivial case $\esize{t}=1$ was shown in \eqref{eb1} above.
We see also \eqref{cs2} by comparing \eqref{tbst2} and \eqref{cs4}.
Finally, the annealed version \eqref{tbst4} follows  (as always) by taking
expectations in the quenched version \eqref{tbst3}.
\end{proof}

\begin{remark}\label{RBSTmulti}
\refT{TEBST} extends to multivariate limits for several full binary trees $t_i$
by the same method, since multivariate limit theorems are known for
$\BSTx_n$, see \cite[Theorem 1.22]{SJ296}.
\end{remark}

\begin{remark}\label{RBSTvar}
  Explicit formulas for the asymptotic
variance $\egamm_t$ in \eqref{tbst1}, and for covariances in the
multivariate version, follow from \eqref{cs1} and \cite[(1.10)--(1.11)]{SJ296},
see also \cite[Theorem 1]{Devroye1991}.
\end{remark}

For the random extended fringe tree $\eBSTxx_n$
we obtain
for the probabilities $q(\eBST_n;t)$ defined in \refSS{SSext}:
\begin{corollary}\label{EBSTC}
  Let $t$ be a full binary tree with $\isize{t}=k>0$,
Then
\begin{align}\label{ebstc2q}
\qsin(\eBST_n;t\mid\eBST_n)  &\pto \qsin(\eBSTxxoo;t)
=2\esize{t}\P(\eBSTxoo=t)
=\frac{2\esize{t}}{(k+1)(k+2)}\P(\BST_k=t\internal)
,\\\label{ebstc2a}
 q(\eBST_n;t)  &\tend \qsin(\eBSTxxoo;t) 
.\end{align}
\end{corollary}
\begin{proof}
  Immediate from \refL{Lfringe}, \eqref{tbst3} and \eqref{teb0}.
\end{proof}

\section{Compressed binary search trees}\label{SBST}

In this section we study fringe trees of the compressed BST $\hBST_n$.
We use again the results by 
\citet{Aldous-fringe} and 
\citet{Devroye1991,Devroye2002} on fringe trees in the BST, and in
particular \eqref{ald}, combined with \eqref{od2} and 
arguments as in \refS{SPat}.

\begin{theorem}  \label{THBST}
Let $t$ be a full binary tree. 
Then $N_t(\hBST_n)$ is asymptotically normal:
there exist constants $\hgb_t>0$ and $\hgam_t>0$ such that, as \ntoo,
\begin{align}\label{thbst1}
  \frac{N_t(\hBST_n)-n\hgb_t}{\sqrt n}\dto N(0,\hgamm_t),
\end{align}
with convergence of mean and variance.
In particular,
\begin{align}\label{thbst2}
  N_t(\hBST_n)/n\pto \hgb_t.
\end{align}
Furthermore, 
there exists a limiting fringe tree distribution given by a
random full binary tree $\hBSTxoo$ such that for every $t\in\hfT$,
\begin{align}\label{thbst0}
 \P(\hBSTxoo=t)= \frac{3}{2}\hgb_t
,\end{align}
and
(quenched version)
\begin{align}\label{thbst3}
\P(\hBSTx_n=t\mid\hBST_n)&=\frac{N_t(\hBST_n)}{|\hBST_n|}
\pto \P(\hBSTxoo=t)
\intertext{and (annealed version)}\label{thbst4}
\P(\hBSTx_n=t)&=\E\frac{N_t(\hBST_n)}{|\hBST_n|}
\to \P(\hBSTxoo=t)
.\end{align}
\end{theorem}

  We conjecture that also all higher moments converge in \eqref{thbst1},
but we have not pursued this and leave it as an open problem.

\begin{proof}
  We use again \eqref{od2}; thus $N_t(\hBST_n)=\GF(\BST_n)$
where $\GF$ is defined by \eqref{od0}--\eqref{od1}.
The asymptotic normality \eqref{thbst1} (with convergence of mean and variance)
then follows from
\cite[Corollary 1.15]{SJ296}; see also \cite{Devroye2002} for similar results.
The convergence in probability \eqref{thbst2} is an immediate consequence.

Furthermore, \cite[Corollary 1.15 and (1.24)]{SJ296} yield
\begin{align}\label{ulla1}
  \hgb_t = \sumk \frac{2}{(k+1)(k+2)}\E \gf(\BST_k)
= \sumk \frac{2}{(k+1)(k+2)}\P (\BST_k\in\cht).
\end{align}
Alternatively,
\eqref{thbst2} and dominated convergence yield, together with \eqref{od2},
\begin{align}\label{ulla3}
  \hgb_t&
=\lim_\ntoo \frac{\E N_t(\hBST_n)}{n}
=\lim_\ntoo \frac{1}{n}\sum_{v\in\BST_n}\indic{\BST_n^v\in\cht}
=\lim_\ntoo \P\bigpar{\BSTx_n\in\cht}
\notag\\&
= \P\bigpar{\BSTxoo\in\cht},
\end{align}
which agrees with \eqref{ulla1} by \eqref{ald}.
The union $\bigcup_{t\in\hfT}\cht$ of the sets $\cht$ over all full binary
trees $t$ is the set $\fTset{0,2}$
of all binary trees where the root has degree 2 or 0.
Hence, \eqref{ulla3} implies
\begin{align}\label{ulla2}
 \sum_{t\in\hfT} \hgb_t 
= \sum_{t\in\hfT}\P \bigpar{\BSTxoo\in\cht}
= \P\bigpar{\BSTxoo\in\fTset{0,2}}
=\lim_\ntoo \P\bigpar{\BSTx_n\in\fTset{0,2}}.
\end{align}
If $v\in \BST_n$, then the fringe tree $\BST_n^v\in\fTset{0,2}$ 
$\iff$
the degree $d(v)\in\set{0,2}$ 
$\iff$ $v\in\hBST_n$.
Hence, \eqref{ulla2} yields, using \eqref{jw2} and dominated convergence,
\begin{align}\label{ulla4}
 \sum_{t\in\hfT} \hgb_t 
=\lim_\ntoo \E\frac{|\set{v\in \BST_n: v\in\hBST_n}|}{n}
=\lim_\ntoo\E \frac{|\hBST_n|}{n}
=\frac23.
\end{align}
Consequently, $\sum_{t\in\hfT} \frac32\hgb_t =1$, so \eqref{thbst0} defines a
probability distribution on full binary trees.

Combining \eqref{thbst2} and \eqref{jw2} we obtain
\begin{align}
  \frac{N_t(\hBST_n)}{|\hBST_n|}\pto \frac32\hgb_t.
\end{align}
The quenched and annealed convergence in distribution \eqref{thbst3} and
\eqref{thbst4} then follow from \eqref{tor3} and \eqref{tor2}
(and  dominated convergence again).

It remains to show that $\hgb_t>0$ and $\hgam_t>0$. For $\hgb_t$, this is
immediate from \eqref{ulla1}.
For $\hgam_t$, let $m:=\esize t$ and note first that we may assume $m\ge2$,
since for $t=\TI$, we have $N_{\TI}(\hBST_n)=\esize{\hBST_n}=\esize{\BST_n}$,
and thus $\hgamm_\TI=2/45$ by \eqref{jw3}.
Fix a suitable $k\ge m$; we may take $k=4$ if $m=2$ and $k=m$ if $m\ge3$.
Assume $n\ge 2k-1$. In the construction of the binary search tree $\BST_n$
in \refSS{SSBST}, stop at every node that receives exactly $2k-1$ items.
At each such node, peek into the future to see whether the fringe tree at that
node will be a full binary tree (with $k$ leaves), or it will contain some
node of outdegree 1; in the latter case, continue the recursive constraction
at this node too, but in the first case, just mark the node and leave it.
The result is a subtree of $\BST_n$ that we denote by $\BSTY_n$; it has a
number $N'_k$ of marked nodes with $2k-1$ items each, and we recover $\BST_n$
by replacing each marked node by a random full binary tree with $k$ leaves
(more precisely, a copy of $\BST_{2k-1}$ conditioned on being a full binary
tree); denote these trees by  $T_1,\dots,T_{N'_k}$.
Every fringe tree $\BST_n^v$ that belongs to $\cht$, and thus has $m$ leaves,
either lies completely in $\BSTY_n$ or in one of the $N'_k$  trees $T_i$;
furthermore,
any fringe tree of $T_i$ that belongs to $\cht$ has to be a copy of $t$.
Thus we have
\begin{align}\label{gab1}
  N_t(\hBST_n)=N'' +\sum_{i=1}^{N'_k} N_t(T_i)
\end{align}
where $N''$ is determined by $\BSTY_n$. 
Condition on $\BSTY_n$ (which also determines $N'_k$).
Then the trees $T_i$ are (conditionally) independent and identically
distributed,
and, by our choice of $k$, $0<\P(N_t(T_i)=1)<1$ so
$c:=\Var N_t(T_i)>0$.
Hence \eqref{gab1} implies that the conditional variance
\begin{align}\label{gab2}
\Var\bigsqpar{  N_t(\hBST_n)\mid\BSTY_n} 
=\sum_{i=1}^{N'_k}\Var N_t(T_i)    
= c N'_k
.\end{align}
The number $N'_k$ equals the number of fringe trees of $\BST_n$ that are
full binary trees of size $2k-1$; we let $\hfT_k$ be the set of all such full
binary trees.
Thus 
\begin{align}
N'_k/n =\P\bigpar{\BSTx_n\in\hfT_k\mid \BST_n}.  
\end{align}
Hence we obtain from the known convergence
$\BSTx_n\dto\BSTxoo$ 
\begin{align}\label{gab4}
\E N'_k/n =\E\P\bigpar{\BSTx_n\in\hfT_k\mid \BST_n}
=\P\bigpar{\BSTx_n\in\hfT_k}
\to
\P\bigpar{\BSTxoo\in\hfT_k}
=:c'
,\end{align}
where it follows from \eqref{ald} that $c'>0$.
Recall also the law of total variance: 
for any square-integrable random variable
$X$ and random variable (or $\gs$-field) $Y$ 
\begin{align}\label{gab5}
\Var X = \E\Var[X\mid Y] + \Var \E[X\mid Y]
\ge \E\Var[X\mid Y] .
\end{align}
Consequently, \eqref{gab2} and \eqref{gab4} yield
\begin{align}\label{gab3}
\Var\bigsqpar{N_t(\hBST_n)}
\ge
\E\Var\bigsqpar{  N_t(\hBST_n)\mid\BSTY_n} 
= c \E N'_k =cc' n + o(n)
.\end{align}
The convergence of variance in \eqref{thbst1} thus yields
$\hgamm_t \ge cc'>0$.
\end{proof}

\begin{remark}\label{RmultiBST}
\refT{THBST} extends to multivariate limits for several full binary trees
by the Cram\'er--Wold device; we omit the details.
\end{remark}

For the random extended fringe tree $\hBSTxx_n$
we obtain
for the probabilities $q(\hBST_n;t)$ defined in \refSS{SSext}:
\begin{corollary}\label{BSTC}
  Let $t$ be a full binary tree.
Then
\begin{align}\label{hbstc2q}
\qsin(\hBST_n;t\mid\hBST_n)  &\pto \qsin(\hBSTxxoo;t)=3\esize{t}\hgb_t
,\\\label{hbstc2a}
 q(\hBST_n;t)  &\tend \qsin(\hBSTxxoo;t) 
.\end{align}
\end{corollary}
\begin{proof}
  Immediate from \eqref{thbst0}--\eqref{thbst3} and \refL{Lfringe}.
\end{proof}

The numbers $\hgb_t$ are given by \eqref{ulla1}, but since $\cht$ is an
infinite set, this formula is of limited use for explicit calculations.
In the remainder of the section, we give one way to find $\hgb_t$ and thus
the limiting fringe distribution $\hBSTxoo$ more explicitly.

\begin{problem}
  It seems possible that similar but more complicated arguments might make it
possible to compute also the asymptotic
variance $\hgamm_t$, but we have not pursued
this, and we leave it as an open problem.
\end{problem}

\begin{remark}
The random BST can be constructed by a simple continuous-time branching
process, which yields a simple description of the 
limiting random fringe tree $\BSTxoo$ as this branching process stopped at
an exponentially distributed random time \cite{Aldous-fringe}
(see also \cite[Example 6.2]{SJ306}). In principle, this leads to a
description of the compressed BST and its limiting fringe tree $\hBSTxoo$, 
but this becomes
more complicated and we have not been able to use it, for example to compute
$\hgb_t$ and the probabilities in \eqref{thbst0}; we therefore do not 
give the details. 
The rather complicated explicit values of $\hgb_t$ for small $t$
calculated in \refS{Sex} (from \refT{TG} below)
also suggest that no really simple description of $\hBSTxoo$ exists.
\end{remark}

\subsection{Computing $\hgb_t$}\label{SSgb}

We define a generating function for binary trees and sets of binary trees as
follows. 
For a binary tree $T$, let 
\begin{align}\label{pT}
 \ppi_T&:=\P(\BST_{|T|}=T),
\\\label{FT}
  F_T(x)& := \ppi_Tx^{|T|}
.\end{align}
For any set $\fTo$ of binary trees, let
\begin{align}\label{FfT}
  F_{\fTo}(x):=\sum_{T\in\fTo} F_T(x).
\end{align}

These generating functions will help us to compute fringe tree probabilities
by the following simple formula for the BST.

\begin{lemma}
  \label{LF}
If\/ $\fTo$ is a set of binary trees, then
\begin{align}\label{lf}
  \P(\BSTxoo\in\fTo)
= 2\intoi F_\fTo(x)(1-x)\dd x.
\end{align}
\end{lemma}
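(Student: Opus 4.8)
The plan is to compute $\P(\BSTxoo\in\fTo)$ by summing the single-tree probabilities $\P(\BSTxoo=T)$ over $T\in\fTo$, and to evaluate each of these by combining Aldous's mixture formula \eqref{ald} with the generating-function bookkeeping encoded in \eqref{pT}--\eqref{FfT}. Since both sides of \eqref{lf} are additive over $\fTo$ (the left by countable additivity of the mixture measure, the right by \eqref{FfT} together with monotone convergence / Tonelli, as all terms are nonnegative), it suffices to prove the identity for a single tree $T$, i.e.
\begin{align}\label{plan1}
  \P(\BSTxoo=T) = 2\intoi F_T(x)(1-x)\dd x = 2\ppi_T\intoi x^{|T|}(1-x)\dd x.
\end{align}

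\textbf{Main computation.} First I would specialize \eqref{ald} to the singleton $\fA=\set{T}$, giving
\begin{align}\label{plan2}
  \P(\BSTxoo=T)=\sumk\frac{2}{(k+1)(k+2)}\P(\BST_k=T).
\end{align}
Now $\P(\BST_k=T)$ vanishes unless $k=|T|$, and in that case it equals $\ppi_T$ by the definition \eqref{pT}. Hence the sum collapses to a single term:
\begin{align}\label{plan3}
  \P(\BSTxoo=T)=\frac{2}{(|T|+1)(|T|+2)}\,\ppi_T.
\end{align}
It then remains only to check the elementary Beta integral
\begin{align}\label{plan4}
  2\intoi x^{|T|}(1-x)\dd x = 2\Bigpar{\frac1{|T|+1}-\frac1{|T|+2}} = \frac{2}{(|T|+1)(|T|+2)},
\end{align}
which matches \eqref{plan3} and establishes \eqref{plan1}. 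Summing \eqref{plan1} over $T\in\fTo$ and using \eqref{FfT} (interchanging sum and integral is justified since all $F_T$ are nonnegative on $\oi$) yields \eqref{lf}.

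\textbf{Expected obstacle.} There is essentially no hard step here; the only points needing a word of care are (i) noting that $\P(\BST_k=T)=\ppi_T\indic{k=|T|}$ so that the infinite mixture \eqref{ald} reduces to one term, and (ii) justifying the interchange of the (countable) sum over $\fTo$ with the integral $\intoi$ — but this is immediate from nonnegativity via Tonelli's theorem, so no dominated-convergence subtleties arise. The whole proof is therefore a one-line application of \eqref{ald} plus the Beta integral \eqref{plan4}.
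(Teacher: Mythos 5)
Your proposal is correct and follows essentially the same route as the paper: reduce to a singleton $\fTo=\set{T}$ by linearity, apply \eqref{ald} noting that only the term $k=|T|$ survives, and match $\frac{2}{(k+1)(k+2)}$ with the Beta integral $2\intoi x^{k}(1-x)\dd x$. The extra care you take with Tonelli for the sum--integral interchange is fine but not needed beyond the one-line "by linearity" the paper uses.
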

\begin{proof}
  By \eqref{FfT} and linearity, it suffices to consider the case when
  $\fTo=\set{T}$ for a single binary tree $T$.
Let $k:=|T|$.
Then, by \eqref{ald},
\begin{align}
  \P(\BSTxoo=T)
& = \frac{2}{(k+1)(k+2)}\P(\BST_k=T)
=2\ppi_T\intoi x^k(1-x)\dd x
\notag\\&
= 2\intoi F_T(x)(1-x)\dd x,
\end{align}
which shows \eqref{lf}.
\end{proof}

Let $t$ be a full binary tree, and recall that  $\chtt$ is the set of all binary
trees that contract to $t$, and  $\cht$ the subset of all such
trees where the root has degree 2 or 0. 
In other words, $\cht$ is the set of all
binary trees that can be obtained from $t$ by replacing any edge by a
path, 
and $\chtt$ is the set of all binary trees 
that can be obtained from these 
by adding a path of length $\ell\ge0$ to the root.
Define the corresponding generating functions
\begin{align}\label{Gt}
  G_{t}(x)&:=F_{\chtt}(x),
\\\label{Ht}
  H_{t}(x)&:=F_{\cht}(x)
.\end{align}

We state a series of lemmas to help us compute these generating functions.

\begin{lemma}\label{CL1}
  If\/ $T$ is a path with $l\ge1$ nodes, then
  \begin{align}\label{cl1}
F_T(x)
=\frac{x^\ell}{\ell!}
.  \end{align}
\end{lemma}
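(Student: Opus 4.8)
The statement is that a path $T$ with $\ell\ge1$ nodes (so $|T|=\ell$) has $F_T(x)=x^\ell/\ell!$, which by the definition \eqref{FT} is equivalent to the claim $\ppi_T=\P(\BST_\ell=T)=1/\ell!$. The plan is to compute this probability directly from the recursive construction of $\BST_\ell$ in \refSS{SSBST}. A path on $\ell$ nodes is a binary tree in which every internal node has exactly one child; there are many such trees (one for each choice of left/right at each of the $\ell-1$ internal nodes), but the key observation is that the event $\BST_\ell=T$ pins down the pivot at every recursive step.

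First I would argue by induction on $\ell$. The base case $\ell=1$ is immediate: $\BST_1=\TI$ with probability $1$, and $1/1!=1$. For the inductive step, suppose $T$ is a path on $\ell\ge2$ nodes. The root of $T$ has exactly one child, say a left child (the right-child case is symmetric), and the fringe tree at that child is a path $T'$ on $\ell-1$ nodes. In the construction of $\BST_\ell$, the root is a uniformly random pivot among the $\ell$ items; the event that the root has only a left child is exactly the event that the pivot is the largest of the $\ell$ items, which has probability $1/\ell$. Conditioned on that, the left subtree is distributed as $\BST_{\ell-1}$ on the remaining $\ell-1$ items, and the right subtree is empty. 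Hence $\P(\BST_\ell=T)=\tfrac1\ell\,\P(\BST_{\ell-1}=T')$, and by the induction hypothesis $\P(\BST_{\ell-1}=T')=1/(\ell-1)!$, giving $\P(\BST_\ell=T)=1/\ell!$ as desired. Then \eqref{FT} yields $F_T(x)=\ppi_T x^{|T|}=x^\ell/\ell!$.

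Alternatively — and perhaps cleaner to write — I would bypass the induction and note that $\BST_\ell$ equals a fixed path $T$ if and only if the sequence of pivots chosen along the recursion is forced, i.e.\ at each step the pivot must be the maximum (if the next edge of $T$ goes left) or the minimum (if it goes right) of the remaining items. Since at the step with $j$ remaining items the pivot is uniform over those $j$ items independently of the earlier choices, the probability of making the prescribed choice is $1/j$, and multiplying over $j=\ell,\ell-1,\dots,2$ (the last item forces $\BST_1=\TI$ with probability $1$) gives $\prod_{j=2}^{\ell}\frac1j=\frac1{\ell!}$.

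I do not anticipate a real obstacle here; the only thing to be careful about is the bookkeeping that the left/right orientations of the path are irrelevant to the probability (each forced choice, whether "take the max" or "take the min", has the same probability $1/j$), so the formula holds uniformly over all $2^{\ell-1}$ paths on $\ell$ nodes. This uniformity is exactly what makes the later summation over sets $\cht$ of subdivided trees tractable.
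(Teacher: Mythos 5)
Your proposal is correct and matches the paper's argument: the paper also computes $\P(\BST_\ell=T)=\frac1\ell\cdot\frac1{\ell-1}\dotsm\frac11=\frac1{\ell!}$ by observing that a given path is obtained by exactly one (forced) choice of pivot at each step, which is precisely your second, non-inductive version. The inductive formulation is just a more verbose rendering of the same idea, so there is nothing to add.
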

\begin{proof}
  By the construction of the BST,
  \begin{align}\label{cl1a}
\ppi_T=
    \P(\BST_\ell=T) = \frac{1}{\ell}\cdot\frac{1}{\ell-1}\dotsm\frac{1}{1}
=\frac{1}{\ell!},
  \end{align}
since a given path $T$ is obtained by exactly one choice of pivot each time.
Hence, \eqref{cl1} follows by the definition \eqref{FT}.
\end{proof}

\begin{lemma}\label{CL2}
If\/ $\fTPath$ is the set consisting of all paths with any number $\ell\ge1$
of nodes, then 
  \begin{align}\label{cl2}
F_\fTPath(x)
= \tfrac12\bigpar{e^{2x}-1}
.  \end{align}
\end{lemma}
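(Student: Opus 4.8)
The plan is to reduce everything to \refL{CL1} by counting how many binary trees are paths on a given number of nodes. First I would observe that a path on $\ell\ge1$ nodes, regarded as a binary tree, is completely determined by choosing for each of its $\ell-1$ edges whether the child is a left child or a right child; hence there are exactly $2^{\ell-1}$ distinct binary trees in $\fTPath$ with $\ell$ nodes. (For $\ell=1$ this count is $2^0=1$, corresponding to the single node tree $\TI$.)

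Next, by \refL{CL1}, every such path $T$ with $|T|=\ell$ has $F_T(x)=x^\ell/\ell!$, a quantity depending only on $\ell$. Summing the defining relation \eqref{FfT} first over the $2^{\ell-1}$ paths of each fixed length $\ell$ and then over $\ell\ge1$ gives
\begin{align*}
F_\fTPath(x)=\suml 2^{\ell-1}\frac{x^\ell}{\ell!}
=\tfrac12\suml\frac{(2x)^\ell}{\ell!}
=\tfrac12\bigpar{e^{2x}-1},
\end{align*}
which is \eqref{cl2}; the series manipulation is justified since it is just the (everywhere convergent) Taylor expansion of $\tfrac12(e^{2x}-1)$.

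There is essentially no real obstacle here. The only point requiring a moment's care is the combinatorial count of paths, in particular making sure the orientation (left vs.\ right) of each of the $\ell-1$ edges is accounted for, and that the $\ell=1$ term is treated correctly (it contributes the single tree $\TI$ with $F_{\TI}(x)=x$, consistent with the general term $2^{\ell-1}x^\ell/\ell!$ at $\ell=1$).
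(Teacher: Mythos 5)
Your proof is correct and follows exactly the paper's argument: count the $2^{\ell-1}$ paths on $\ell$ nodes via the left/right choice at each of the $\ell-1$ edges, apply \refL{CL1}, and sum the resulting exponential series. Nothing to add.
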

\begin{proof}
There are $2^{\ell-1}$ different paths with $\ell$ nodes.
Thus \refL{CL1} yields,
letting $P_\ell$ denote any path with $|P_\ell|=\ell$,
  \begin{align}\label{cl2a}
F_\fTPath(x) = \suml 2^{\ell-1} F_{P_\ell}(x)
=\frac12\suml\frac{(2x)^\ell}{\ell!}
= \frac12\bigpar{e^{2x}-1}
.  \end{align}
\end{proof}

\begin{lemma}  \label{CL3}
Let $T$ be a binary tree and let $T_1$ be a tree obtained by adding a path 
with $\ell\ge1$ nodes to the root of $T$. Then
\begin{align}\label{cl3}
F_{T_1}(x)=\intox F_T(y) \frac{(x-y)^{\ell-1}}{(\ell-1)!} \dd y 
.\end{align}
\end{lemma}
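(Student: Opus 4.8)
The plan is to compute $\ppi_{T_1}$ in terms of $\ppi_T$ by tracking how the BST on $|T_1|$ items can realize the tree $T_1$, and then to recognize the resulting convolution identity at the level of generating functions. Write $k:=|T|$, so $|T_1|=k+\ell$. In the recursive construction of $\BST_{k+\ell}$, the tree $T_1$ is obtained precisely when the first $\ell$ pivots successively ``peel off'' the path of $\ell$ nodes attached above the root of $T$ (each such pivot being the extreme element — largest or smallest, depending on the left/right shape of the path — among the items currently present), and the remaining $k$ items then build a copy of $T$. I would argue that the probability the first $\ell$ pivots produce the prescribed path is
\begin{align}\label{cl3-plan1}
\frac{1}{(k+\ell)(k+\ell-1)\dotsm(k+1)} = \frac{k!}{(k+\ell)!},
\end{align}
since at step $i$ there are $k+\ell-i+1$ items present and exactly one of them is the required pivot (the path shape is fixed, so there is no factor $2$ here — the branch directions are determined by $T_1$). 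Conditioned on this, the residual $k$ items are a uniformly random set and build $T$ with probability $\ppi_T$. Hence $\ppi_{T_1}=\frac{k!}{(k+\ell)!}\,\ppi_T$, i.e.
\begin{align}\label{cl3-plan2}
F_{T_1}(x)=\ppi_{T_1}x^{k+\ell}=\frac{k!}{(k+\ell)!}\,\ppi_T\,x^{k+\ell}.
\end{align}

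Next I would verify that the claimed integral formula \eqref{cl3} produces exactly this. Using $F_T(y)=\ppi_T y^{k}$ and the Beta integral,
\begin{align}\label{cl3-plan3}
\intox \ppi_T y^{k}\,\frac{(x-y)^{\ell-1}}{(\ell-1)!}\dd y
=\frac{\ppi_T}{(\ell-1)!}\,x^{k+\ell}\,B(k+1,\ell)
=\frac{\ppi_T}{(\ell-1)!}\,x^{k+\ell}\,\frac{k!\,(\ell-1)!}{(k+\ell)!}
=\frac{k!}{(k+\ell)!}\,\ppi_T\,x^{k+\ell},
\end{align}
which matches \eqref{cl3-plan2}. (Equivalently, one can note that the operator $F\mapsto\intox F(y)\frac{(x-y)^{\ell-1}}{(\ell-1)!}\dd y$ is the $\ell$-fold iterate of integration $F\mapsto\intox F$, and that integrating $\ppi_T y^k$ a total of $\ell$ times gives $\ppi_T x^{k+\ell}k!/(k+\ell)!$; this also makes transparent why the lemma composes correctly with \refL{CL1}, taking $T=\TI$.)

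The only genuine point requiring care — and the step I expect to be the main obstacle to state cleanly — is the combinatorial claim \eqref{cl3-plan1}: one must argue carefully that, given the fixed left/right shape of the added path, the event that $\BST_{k+\ell}$ has $T_1$ as its skeleton factors as ``the first $\ell$ pivots are the prescribed extreme elements'' followed by ``the remaining items form $T$,'' that these two events are independent once we condition on the set of surviving items (which, by exchangeability, is uniformly random of the right size and yields the same BST distribution $\BST_k$), and that there is no over- or under-counting. Once that is pinned down, \eqref{cl3-plan2}--\eqref{cl3-plan3} finish the proof. I would present it in exactly this order: first the probabilistic computation of $\ppi_{T_1}$, then the elementary Beta-integral check that \eqref{cl3} reproduces it.
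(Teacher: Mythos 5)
Your proposal is correct and follows essentially the same route as the paper: first compute $\ppi_{T_1}=\frac{k!}{(k+\ell)!}\ppi_T$ by noting that each of the first $\ell$ pivots is forced (exactly one valid choice per step, since the left/right shape of the added path is fixed), and then verify via the Beta integral that the right-hand side of \eqref{cl3} equals $\ppi_{T_1}x^{k+\ell}$. The combinatorial point you flag as delicate is exactly the one the paper also treats briefly (``exactly one choice of pivot each of the first $\ell$ times, and then by the same choices as for $T$''), so no further elaboration is needed.
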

\begin{proof}
  Let $|T|=k$; then $|T_1|=k+\ell$.
In analogy with \eqref{cl1a}, the construction of the BST 
yields
  \begin{align}\label{cl3a}
\ppi_{T_1}&
=   \P(\BST_{k+\ell}=T_1) 
= \frac{1}{k+\ell}\cdot\frac{1}{k+\ell-1}\dotsm\frac{1}{k+1}\cdot\P(\BST_k=T)
\notag\\&
=\frac{k!}{(k+\ell)!}\ppi_T,
  \end{align}
since $T_1$ is obtained by exactly one choice of pivot each of the first
$\ell$ times, and then by the same choices as for $T$.
On the other hand, 
a standard Beta integral yields
\begin{align}\label{cl1b}
  \intox F_T(y)(x-y)^{\ell-1}\dd y&
=
  \ppi_T\intox y^k(x-y)^{\ell-1}\dd y
= \ppi_Tx^{k+\ell}\intoi z^k(1-z)^{\ell-1}\dd z
\notag\\&
= \ppi_T\frac{\gG(k+1)\gG(\ell)}{\gG(k+1+\ell)}x^{k+\ell}
= \ppi_{T}\frac{k!\,(\ell-1)!}{(k+\ell)!}x^{k+\ell}
.\end{align}
By \eqref{cl3a} and \eqref{FT}, this equals 
$(\ell-1)!\,x^{k+\ell}\ppi_{T_1}$,  
and thus \eqref{cl3} follows by the definition \eqref{FT}.
\end{proof}

\begin{lemma}  \label{CL4}
Let $\fTo$ be a set of binary trees and let $\fT_1$ be the set of 
trees obtained by adding a path with any number $\ell\ge1$ of nodes
to the root of any tree $T\in\fTo$. Then
\begin{align}\label{cl4}
F_{\fT_1}(x)
= 2\intox F_{\fTo}(y) e^{2(x-y)}\dd y
.\end{align}
\end{lemma}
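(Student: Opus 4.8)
The plan is to reduce \refL{CL4} to \refL{CL3} (the single-path case) by summing over all path lengths $\ell\ge1$ and over the $2^{\ell-1}$ possible left/right patterns of a path, exactly as \refL{CL2} refined \refL{CL1}. First I would fix a tree $T\in\fTo$ and let $\fT_1^T$ be the set of trees obtained by attaching to the root of $T$ a path with some number $\ell\ge1$ of nodes (each new node's child being left or right). Since a path of $\ell$ nodes admits $2^{\ell-1}$ orientation choices, and by \refL{CL3} each such $T_1$ has $F_{T_1}(x)=\intox F_T(y)\frac{(x-y)^{\ell-1}}{(\ell-1)!}\dd y$ regardless of the orientation pattern, summing gives
\begin{align*}
F_{\fT_1^T}(x)=\suml 2^{\ell-1}\intox F_T(y)\frac{(x-y)^{\ell-1}}{(\ell-1)!}\dd y
=\intox F_T(y)\,\frac12\suml\frac{\bigpar{2(x-y)}^{\ell-1}}{(\ell-1)!}\cdot2\dd y,
\end{align*}
and recognizing $\suml\frac{(2(x-y))^{\ell-1}}{(\ell-1)!}=e^{2(x-y)}$ this becomes $2\intox F_T(y)e^{2(x-y)}\dd y$.

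Then I would sum over $T\in\fTo$. Since $\fT_1=\bigcup_{T\in\fTo}\fT_1^T$ and these sets are disjoint (the attached tree $T$ is recovered from $T_1$ as the fringe tree below the lowest node of outdegree $2$ on the root path — or more simply, $T$ is determined by $T_1$ together with knowing it came from $\fTo$; in any case distinct $T$ give distinct $T_1$), linearity of $F_{(\cdot)}$ as defined in \eqref{FfT} gives
\begin{align*}
F_{\fT_1}(x)=\sum_{T\in\fTo}F_{\fT_1^T}(x)
=\sum_{T\in\fTo}2\intox F_T(y)e^{2(x-y)}\dd y
=2\intox\Bigpar{\sum_{T\in\fTo}F_T(y)}e^{2(x-y)}\dd y
=2\intox F_\fTo(y)e^{2(x-y)}\dd y,
\end{align*}
which is \eqref{cl4}.

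The only real point requiring care is the interchange of the sum over $\ell$ (and over $T$) with the integral, and the question of whether $\fTo\mapsto\fT_1$ is injective on the level of individual trees so that no tree of $\fT_1$ is double-counted. For the interchange, everything in sight is a power series with nonnegative coefficients, so all manipulations are justified as identities of formal power series (equivalently, by monotone convergence for $x\ge0$ within the radius of convergence, which is here all of $\bbR$ since the coefficients are dominated by those of $e^{2x}$). For the disjointness: if a tree $T_1\in\fT_1$ arises from two different $T,T'\in\fTo$ by attaching root-paths of lengths $\ell,\ell'$, then comparing the two decompositions forces $\ell=\ell'$ (the length of the attached path is the number of consecutive outdegree-$1$ nodes from the root, which is intrinsic to $T_1$ unless the root of $T$ itself has outdegree $1$) — a minor subtlety worth a sentence, but in the applications $\fTo$ will consist of trees whose roots have outdegree $0$ or $2$, so no ambiguity arises. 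I would simply note that the identity \eqref{cl4} holds as stated interpreting $F$-functions via the defining sums \eqref{FT}--\eqref{FfT}, which already account for multiplicities correctly. This is really the main (and only) obstacle, and it is a bookkeeping matter rather than a substantive one.
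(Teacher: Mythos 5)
There is a genuine (though localized) error in the key counting step. You assert that a path with $\ell$ nodes attached above the root of $T$ admits $2^{\ell-1}$ orientation choices, but the correct count is $2^{\ell}$: besides the $\ell-1$ internal edges of the new path, the edge from the \emph{last} new node down to the former root of $T$ also carries a left/right choice (the former root becomes either the left or the right child of that node). This is exactly the point the paper flags parenthetically, and it is why \refL{CL6} — where the bottom node of the path instead receives two subtrees, so no such choice remains — uses $2^{\ell-1}$ while \refL{CL4} needs $2^{\ell}$. A sanity check: applying \eqref{cl4} with $\fTo=\set{\TI}$, so $F_{\fTo}(y)=y$, gives $2\intox y e^{2(x-y)}\dd y=\tfrac12(e^{2x}-1)-x$, which is precisely $F_{\fTPath}(x)-F_{\TI}(x)$ from \refL{CL2}, i.e.\ the generating function of all paths with at least two nodes; with your count $2^{\ell-1}$ one would get only half of this.

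With your count, the sum is
\begin{align*}
\suml 2^{\ell-1}\intox F_T(y)\frac{(x-y)^{\ell-1}}{(\ell-1)!}\dd y
=\intox F_T(y)\,e^{2(x-y)}\dd y,
\end{align*}
\emph{without} the leading factor $2$; your manipulation "$\tfrac12\sum(\cdots)\cdot 2$" equals $1\cdot\sum(\cdots)$, yet you conclude with an overall factor $2$, so a compensating algebra slip silently restores the answer. Replacing $2^{\ell-1}$ by $2^{\ell}$ fixes both issues at once and yields \eqref{cl4} cleanly. Everything else — reducing to \refL{CL3}, summing over $T\in\fTo$, the disjointness remark, and the justification of interchanging sums and integrals via nonnegativity — is correct and is essentially the paper's argument.
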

\begin{proof}
  For each $\ell$, there are $2^\ell$ different paths of length $\ell$ that
  can be added 
(including the choice of edge from the end of the path to the former root).
Hence, by \refL{CL3}, summing over all $T\in\fTo$ and all possible paths,
\begin{align}\label{cl4a}
F_{\fTo}(x) = \sum_{T\in\fTo}\sum_{\ell\ge1}2^\ell
\intox F_T(y) \frac{(x-y)^{\ell-1}}{(\ell-1)!} \dd y 
= 2\intox F_{\fTo}(y) e^{2(x-y)}\dd y
.\end{align}
\end{proof}

\begin{lemma}  \label{CL5}
Let $T_\sL$ and $T_\sR$ be binary trees and let $T_1$ be a tree obtained by
taking a path with $\ell\ge1$ nodes and adding $T_\sL$ and $T_\sR$ as the
left and right subtrees of the last node in the path.
Then 
\begin{align}\label{cl5}
F_{T_1}(x)=\intox F_{T_\sL}(y)F_{T_\sR}(y) \frac{(x-y)^{\ell-1}}{(\ell-1)!} \dd y 
.\end{align}
\end{lemma}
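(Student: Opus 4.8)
The plan is to mimic the proofs of \refL{CL1} and \refL{CL3}: first compute the BST probability $\ppi_{T_1}=\P(\BST_{|T_1|}=T_1)$ directly from the recursive construction in \refSS{SSBST}, then evaluate the integral on the right-hand side of \eqref{cl5} with a Beta integral exactly as in \eqref{cl1b}, and finally check that the two resulting expressions for $F_{T_1}(x)$ coincide.

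For the first step I would set $k_\sL:=|T_\sL|$, $k_\sR:=|T_\sR|$ and $k:=k_\sL+k_\sR$, so that $|T_1|=\ell+k$. In building $\BST_{\ell+k}$, each of the first $\ell-1$ path nodes has a single child, so its pivot must be the current minimum or maximum (which one is dictated by the prescribed direction of the path edge in $T_1$); these choices have probabilities $\tfrac1{\ell+k},\tfrac1{\ell+k-1},\dots,\tfrac1{k+2}$. At the last path node, $k+1$ items remain and the pivot must be the one with exactly $k_\sL$ smaller and $k_\sR$ larger items, which has probability $\tfrac1{k+1}$; conditionally on that, the two subtrees are independent and agree with $T_\sL,T_\sR$ with probabilities $\ppi_{T_\sL},\ppi_{T_\sR}$. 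Multiplying (the product of the first $\ell-1$ factors being empty, hence $1$, when $\ell=1$) gives
\[
\ppi_{T_1}=\frac{(k+1)!}{(\ell+k)!}\cdot\frac1{k+1}\cdot\ppi_{T_\sL}\ppi_{T_\sR}
=\frac{k!}{(\ell+k)!}\,\ppi_{T_\sL}\ppi_{T_\sR},
\]
so that $F_{T_1}(x)=\dfrac{k!}{(\ell+k)!}\,\ppi_{T_\sL}\ppi_{T_\sR}\,x^{\ell+k}$ by \eqref{FT}.

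For the second step, $F_{T_\sL}(y)F_{T_\sR}(y)=\ppi_{T_\sL}\ppi_{T_\sR}\,y^{k}$ by \eqref{FT}, and the Beta integral $\intox y^{k}(x-y)^{\ell-1}\dd y=\frac{k!\,(\ell-1)!}{(k+\ell)!}\,x^{k+\ell}$ (computed exactly as in \eqref{cl1b}) turns the right-hand side of \eqref{cl5} into $\frac{k!}{(k+\ell)!}\,\ppi_{T_\sL}\ppi_{T_\sR}\,x^{k+\ell}$, which matches $F_{T_1}(x)$, proving \eqref{cl5}. I expect no genuine obstacle here: this is a routine variant of \refL{CL3}, and the only point needing a little care is the bookkeeping of the $\ell$ pivot choices along the path — in particular that the branching node contributes a factor $\tfrac1{k+1}$ and no binomial factor, precisely because $T_\sL$ and $T_\sR$ are fixed trees rather than prescribed sizes. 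An alternative route would be to combine \refL{CL3} with the case $\ell=1$, namely $F_{T^*}(x)=\intox F_{T_\sL}(y)F_{T_\sR}(y)\dd y$ where $T^*$ is the root together with subtrees $T_\sL,T_\sR$, and then interchange the order of integration, but the direct computation above is shorter.
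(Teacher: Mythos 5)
Your proof is correct and follows essentially the same route as the paper: compute $\ppi_{T_1}=\frac{(k_\sL+k_\sR)!}{(k_\sL+k_\sR+\ell)!}\ppi_{T_\sL}\ppi_{T_\sR}$ from the pivot choices along the path (the paper treats all $\ell$ pivots uniformly, while you single out the branching node's factor $\tfrac1{k+1}$, but the product is the same) and then match against the Beta integral exactly as in Lemma~\ref{CL3}. No gaps.
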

\begin{proof}
This is similar to the proof of \refL{CL3}.
  Let $|T_\sL|=k_\sL$ and $|T_\sR|=k_\sR$; then $|T_1|=k_\sL+k_\sR+\ell$.
The same argument as for \eqref{cl3a} now yields
  \begin{align}\label{cl5a}
\ppi_{T_1}&
=   \P(\BST_{k_\sL+k_\sR+\ell}=T_1) 
= \frac{1}{k_\sL+k_\sR+\ell}
\dotsm\frac{1}{k_\sL+k_\sR+1}\cdot\P(\BST_{k_\sL}=T_\sL)\cdot\P(\BST_{k_\sR}=T_\sR)
\notag\\&
=\frac{(k_\sL+k_\sR)!}{(k_\sL+k_\sR+\ell)!}\ppi_{T_\sL}\ppi_{T_\sR},
  \end{align}
since if the first $\ell$ pivots have been chosen correctly to form
the desired path, with the last node having left and right subtrees of sizes
$k_\sL$ and $k_\sR$, respectively,
then the shapes of those subtrees are independent.
The rest of the proof is as for \refL{CL3} with only notational changes.
\end{proof}

\begin{lemma}  \label{CL6}
Let $\fT_\sL$ and $\fT_\sR$ be two sets of
binary trees 
and let $\fT_1$ be the set of 
trees obtained by taking
a path with any number $\ell\ge1$ of nodes
and adding two trees $T_\sL\in\fT_\sL$ and $T_\sR\in\fT_\sR$ as the
left and right subtrees of the last node in the path.
Then
\begin{align}\label{cl6}
F_{\fT_1}(x)
= \intox F_{\fT_\sL}(y)F_{\fT_\sR}(y) e^{2(x-y)}\dd y
.\end{align}
\end{lemma}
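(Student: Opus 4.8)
The plan is to obtain \eqref{cl6} from \refL{CL5} by summation, in exact parallel with the way \refL{CL4} was obtained from \refL{CL3}. Fix $T_\sL\in\fT_\sL$, $T_\sR\in\fT_\sR$ and $\ell\ge1$, and let $T_1$ be the tree built from a path of $\ell$ nodes with $T_\sL$ and $T_\sR$ attached as the left and right subtrees of its terminal node, as in \refL{CL5}. That lemma gives
\begin{align*}
  F_{T_1}(x)=\intox F_{T_\sL}(y)F_{T_\sR}(y)\frac{(x-y)^{\ell-1}}{(\ell-1)!}\dd y .
\end{align*}
Two observations turn this into \eqref{cl6}. First, for fixed $\ell$, $T_\sL$ and $T_\sR$ there are exactly $2^{\ell-1}$ trees in $\fT_1$ of this form: the only freedom is the left/right orientation of each of the $\ell-1$ edges internal to the path, since the two children of the terminal node are already pinned to carry $T_\sL$ on the left and $T_\sR$ on the right. (This is where the count differs from that in \refL{CL4}, where the edge from the end of the path to the former root is also free, producing $2^\ell$ and hence the leading factor $2$ in \eqref{cl4}.) Second, the right-hand side above does not depend on which of these orientations is chosen, because $\ppi_{T_1}=\P(\BST_{|T_1|}=T_1)$, and therefore $F_{T_1}$, depends only on the sizes of the constituent pieces and not on left/right labels.

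Next I would observe that every tree in $\fT_1$ arises in exactly one such way: starting at the root and following the unique child through nodes of outdegree $1$ until the first node of outdegree $2$ is reached recovers the path (hence $\ell$ and its shape), and the two subtrees of that node recover $T_\sL\in\fT_\sL$ and $T_\sR\in\fT_\sR$. Summing the displayed identity over all $T_\sL\in\fT_\sL$, all $T_\sR\in\fT_\sR$, all $\ell\ge1$, and all $2^{\ell-1}$ path shapes, and interchanging the summations with the integral, therefore yields
\begin{align*}
  F_{\fT_1}(x)
  &=\intox F_{\fT_\sL}(y)F_{\fT_\sR}(y)\Bigpar{\sum_{\ell\ge1}2^{\ell-1}\frac{(x-y)^{\ell-1}}{(\ell-1)!}}\dd y\\
  &=\intox F_{\fT_\sL}(y)F_{\fT_\sR}(y)e^{2(x-y)}\dd y ,
\end{align*}
which is \eqref{cl6}.

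There is no substantial obstacle in this argument. The interchange of summation and integration is harmless, since all of $F_T$, $F_{\fT_\sL}$, $F_{\fT_\sR}$ have nonnegative coefficients, so every term in sight is nonnegative and Tonelli's theorem applies (equivalently, the relevant power series converge locally uniformly on $[0,1)$). The only point that repays a moment's care is the orientation bookkeeping — getting $2^{\ell-1}$ rather than $2^\ell$ path shapes — which is exactly what makes the prefactor in \eqref{cl6} equal to $1$ instead of the $2$ appearing in \eqref{cl4}.
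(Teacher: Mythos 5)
Your proof is correct and follows exactly the paper's argument: sum Lemma~\ref{CL5} over $T_\sL\in\fT_\sL$, $T_\sR\in\fT_\sR$, and $\ell\ge1$, with the key observation that for a given $\ell$ there are $2^{\ell-1}$ admissible paths (only the $\ell-1$ edges internal to the path carry a left/right choice), so that $\sum_{\ell\ge1}2^{\ell-1}(x-y)^{\ell-1}/(\ell-1)!=e^{2(x-y)}$. The orientation bookkeeping you highlight is precisely the point the paper's own (very terse) proof singles out.
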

\begin{proof}
  By \refL{CL5} and summing over $T_\sL\in\fT_\sL$, $T_\sR\in\fT_\sR$,
and $\ell\ge1$,
  just as   in the proof of \refL{CL4}; note that for a given $\ell$, now
  there are  $2^{\ell-1}$ paths.
\end{proof}

Finally, we obtain our formula for $\hgb_t$, using the functions $G_t(x)$
for which we provide a recursion.

\begin{theorem}\label{TG}
  Let $t$ be a full binary tree.
  \begin{alphenumerate}
  \item \label{TG1}
The generating function $G_{t}(x)$ can be computed recursively as follows.
\begin{rommenumerate}
\item \label{TG1a}
If\/ $t=\TI$, then
  \begin{align}\label{tg1a}
    G_\TI (x) = \frac12(e^{2x}-1). 
  \end{align}
\item \label{TG1b}
If\/ $|t|>1$, and the root of $t$ has left and right
subtrees $t_\sL$ and $t_\sR$, then
\begin{align}
  \label{tg1b}
G_{t}(x) = \intox G_{t_\sL}(y)G_{t_\sR}(y)e^{2(x-y)}\dd y.
\end{align}
\end{rommenumerate}

\item \label{TG2}
Then $\hgb_t$ is given by:
  \begin{rommenumerate}
  \item \label{TG2a}
    If\/ $|t|=1$, \ie, $t=\TI$, then $\hgb_t=1/3$.
  \item \label{TG2b}
If\/ $|t|>1$, and the root of $t$ has left and right
subtrees $t_\sL$ and $t_\sR$, then
\begin{align}
  \label{tg2b}
\hgb_{t} = \intoi (1-x)^2G_{t_\sL}(x)G_{t_\sR}(x)\dd x.
\end{align}
\end{rommenumerate}
\end{alphenumerate}
\end{theorem}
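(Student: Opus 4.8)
The plan is to build up $G_t$ and $H_t$ from the lemmas already established, matching the recursive structure of a full binary tree, and then read off $\gb_t$ via Lemma~\ref{LF}. For part~\ref{TG1}, start from the definitions \eqref{Gt}--\eqref{Ht}: $\chtt$ is the set of binary trees contracting to $t$, and $\cht=\chtt\cap\fTset{0,2}$ is the subset whose root has degree $0$ or $2$. First consider the base case \ref{TG1a}: when $t=\TI$, a tree contracts to $\TI$ iff it is a path (possibly of length $1$), so $\chtt=\fTPath$ in the notation of Lemma~\ref{CL2}, and \eqref{cl2} gives $G_\TI(x)=\tfrac12(e^{2x}-1)$. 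For the inductive case \ref{TG1b}, when $|t|>1$ the root of $t$ has outdegree $2$ with left and right subtrees $t_\sL,t_\sR$; a binary tree $T$ contracts to $t$ iff $T$ consists of a path $\ge 1$ nodes from the root to a branching node, whose left and right subtrees contract to $t_\sL$ and $t_\sR$ respectively. (The path from the root corresponds to the edge of $t$ above its root, which is present once we think of $\chtt$ as ``add a path of length $\ell\ge0$ to the root of a tree in $\cht$'' — but here it is cleaner to observe directly that $\chtt$ is exactly the family described in Lemma~\ref{CL6} applied to $\fT_\sL=\chtx{t_\sL}$ and $\fT_\sR=\chtx{t_\sR}$.) Thus \eqref{cl6} yields \eqref{tg1b} with $G_{t_\sL},G_{t_\sR}$ in place of $F_{\fT_\sL},F_{\fT_\sR}$.

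For part~\ref{TG2}, recall from \eqref{od2} and \eqref{ulla3} that $\gb_t=\P(\BSTxoo\in\cht)=\P(\BSTxoo\in\fTo)$ with $\fTo=\cht$, so Lemma~\ref{LF} gives $\gb_t=2\intoi F_{\cht}(x)(1-x)\dd x=2\intoi H_t(x)(1-x)\dd x$. Case \ref{TG2a}: for $t=\TI$ we have $\cht=\fTset0=\set{\TI}$ (the only tree whose root has degree $0$ or $2$ and which contracts to $\TI$), so $H_\TI(x)=F_\TI(x)=x$ by \eqref{cl1} with $\ell=1$, whence $\gb_\TI=2\intoi x(1-x)\dd x=2\cdot\tfrac16=\tfrac13$, consistent with \eqref{jw1}. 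Case \ref{TG2b}: for $|t|>1$, every tree in $\cht$ has a root of outdegree $2$, and $\cht$ is obtained by taking a path with \emph{exactly} $\ell=1$ node (the root itself) and attaching left and right subtrees in $\chtx{t_\sL}$ and $\chtx{t_\sR}$; by the $\ell=1$ case of Lemma~\ref{CL5}, summed over both subtree families as in Lemma~\ref{CL6}, $H_t(x)=\intox G_{t_\sL}(y)G_{t_\sR}(y)\dd y$. Therefore
\begin{align*}
\gb_t
= 2\intoi\Bigpar{\intox G_{t_\sL}(y)G_{t_\sR}(y)\dd y}(1-x)\dd x
= 2\intoi G_{t_\sL}(y)G_{t_\sR}(y)\Bigpar{\int_y^1(1-x)\dd x}\dd y,
\end{align*}
where I exchanged the order of integration (Tonelli, all integrands nonnegative); since $\int_y^1(1-x)\dd x=\tfrac12(1-y)^2$, this gives $\gb_t=\intoi(1-y)^2 G_{t_\sL}(y)G_{t_\sR}(y)\dd y$, which is \eqref{tg2b} after renaming $y$ to $x$.

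The main obstacle is not analytic but combinatorial bookkeeping: being careful about where the ``path of length $\ell$'' lives. In $\chtt$ one attaches a path of length $\ell\ge0$ \emph{above} the root (so $G_t$ genuinely uses $\ell\ge1$ in Lemma~\ref{CL6}, absorbing the edge above $t$'s root), whereas $\cht$ forbids that extra path at the root but still subdivides every internal edge of $t$; reconciling these with the lemma statements (which phrase things in terms of paths of length $\ell\ge1$) requires identifying, for $|t|>1$, $\chtt$ with the Lemma~\ref{CL6}-family over $\chtx{t_\sL}$, $\chtx{t_\sR}$ while $\cht$ is the same family restricted to $\ell=1$. I would spell this identification out explicitly, perhaps via a short induction on $|t|$ showing simultaneously that $G_t=F_{\chtt}$ satisfies \eqref{tg1b} and that $H_t=F_{\cht}$ equals $\intox G_{t_\sL}G_{t_\sR}\dd y$; everything else (the Beta integrals, the change of order of integration) is routine given the preceding lemmas.
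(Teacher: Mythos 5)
Your proposal is correct and follows essentially the same route as the paper: Lemmas~\ref{CL2} and~\ref{CL6} for the recursion in part~\ref{TG1}, and Lemma~\ref{CL5} with $\ell=1$ plus Lemma~\ref{LF} and an interchange of integration for \eqref{tg2b}. The only (harmless) deviation is the base case \ref{TG2a}, which you derive from $H_\TI(x)=x$ and Lemma~\ref{LF} whereas the paper simply cites $N_\TI(\hBST_n)=\esize{\hBST_n}$ together with \eqref{jw1} and \eqref{tbst2}; both give $\gb_\TI=1/3$.
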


\begin{proof}
\pfitemref{TG1}
The recursion \eqref{tg1a}--\eqref{tg1b} is
an immediate consequence of 
the definition \eqref{Gt} and
\refLs{CL2} and \ref{CL6}.

\pfitemref{TG2a}
If $t=\TI$, then $N_t(\hBST_n)=\esize{\hBST_n}$, and thus \eqref{jw1} and
  \eqref{thbst2} show $\hgb_t=1/3$.

\pfitemref{TG2b} 
\refL{CL5} with $\ell=1$ shows, by summing over all
$T_\sL\in\chtx{t_\sL}$  and $T_\sR\in\chtx{t_\sR}$,
and recalling \eqref{Ht},
\begin{align}\label{tg3}
  H_{t}(x)
=  F_{\cht}(x)
=\intox G_{t_\sL}(y)G_{t_\sR}(y) \dd y 
.\end{align}
Hence, \eqref{ulla3} and \refL{LF} yield
\begin{align}\label{tg4}
  \hgb_{t}&=\P(\BSTxoo\in\cht)
=2\intoi H_{t}(x)(1-x)\dd x
=2\iint_{0\le y\le x\le 1} (1-x) G_{t_\sL}(y)G_{t_\sR}(y) \dd y \dd x
\notag\\&
=\intoi (1-y)^2 G_{t_\sL}(y)G_{t_\sR}(y) \dd y,
\end{align}
which proves \eqref{tg2b}.
(Alternatively, one can use  integration by parts in \eqref{tg4}.)
\end{proof}

\begin{remark}
  Note that \eqref{tg1b} and \eqref{tg2b} hold for $t=\TI$ too if we define
  $G_\emptyset:=1$. 

Furthermore, \eqref{tg2b}, \eqref{tg1b}, and an integration by parts yield
the alternative formula 
\begin{align}
  \label{tg9}
\hgb_{t} &= \intoi (1-x)^2e^{2x}\cdot G_{t_\sL}(x)G_{t_\sR}(x)e^{-2x}\dd x
=- \intoi \Bigpar{(1-x)^2e^{2x}}'\cdot \Bigpar{e^{-2x}G_{t}(x)}\dd x
\notag\\&
= \intoi 2x(1-x)e^{2x}\cdot e^{-2x}G_{t}(x)\dd x
= \intoi 2x(1-x)G_{t}(x)\dd x.
\end{align}
This is perhaps more elegant than \eqref{tg2b}, but \eqref{tg2b} seems
better for calculations.
\end{remark}

\section{The critical beta-splitting tree}\label{SCB}

The (annealed) extended fringe tree distribution for the critical
beta-splitting random tree $\CB_n$ is described directly in 
\cite[Theorem 7]{Aldous-beta3},
see also  \cite[Sections 4.2 and 4.10]{Aldous-beta2}.
We use our notation in \refSS{SSext}, now for the trees $\CB_n$ and their
limiting fringe tree $\CBXXoo$;
recall that this is an infinite tree with a
unique infinite path $v_0,v_1,\dots$ from the root $v_0$ (really a leaf),
see \refR{Rsin}; 
the tree $\CBXXXoo i$ is the fringe tree of $\CBXXoo$
rooted at $v_i$ (with the natural
definition) and these fringe trees are nested.
Then
\cite[Theorem 7]{Aldous-beta3} says that
the leaf sizes $\esize{\CBXXXoo i}$, $i=0,1,\dots$, 
of these trees form a 
Markov chain with certain (explicit) transition probabilities; given these
leaf sizes, $n_i$ say,  the sibling of each tree $\CBXXXoo i$, i.e. the tree
$\CBXXXoo{i+1}\setminus(\CBXXXoo{i}\cup\set{v_{i+1}})$, 
has $n_{i+1}-n_i$ leaves and 
is a copy of $\CB_{n_{i+1}-n_i}$, with all these trees (conditionally)
independent. 

Alternatively, which is simpler for our purposes, we have by
\cite[(60) and (70)]{Aldous-beta3}, for any $m\ge2$ and 
with $h_{m-1}$ the harmonic number \eqref{harmonic},
\begin{align}\label{scb1}
  \frac{\E[N_m(\CB_n)]}{n}\to\frac{6}{\pi^2}\,\frac{h_{m-1}}{m(m-1)},
\end{align}
(This is a version of \cite[Theorem 3]{Aldous-beta3}, with other proofs
given in \cite{SJ386} and \cite{iksanovHD}.)

\begin{theorem}
  \label{TCB}
For any full binary tree $t$ with $m:=\esize{t}\ge2$,
\begin{align}\label{scb2}
  \frac{\E[N_t(\CB_n)]}{n}\to\frac{6}{\pi^2}\,\frac{h_{m-1}}{m(m-1)} \P(\CB_m=t)
.\end{align}
Hence,
\begin{align}\label{scb3}
\P(\CBx_n=t) \to
\P(\CBx_\infty=t) :=
  \begin{cases}
    \frac12, & t=\TI,
\\
\frac{3}{\pi^2}\,\frac{h_{m-1}}{m(m-1)} \P(\CB_m=t),
& \esize{t}\ge2
  \end{cases}
\end{align}
and, if\/ $m=\esize{t}\ge2$, 
\begin{align}\label{scb4}
\qsin(\CB_n;t)\to\qsin(\CBXXoo;t)
=\frac{6}{\pi^2}\,\frac{h_{m-1}}{m-1} \P(\CB_m=t)  
.\end{align}
\end{theorem}
\begin{proof}
First, \eqref{scb1} and
\eqref{tor11} yield \eqref{scb2}.

Next, \eqref{scb2} and \eqref{tor2} yield \eqref{scb3} for $\esize{t}\ge2$,
recalling that $|\CB_n|=2n-1$; these formulas also show the trivial case
$t=\TI$. 

Finally, \eqref{scb4} follows by the annealed version of \refL{Lfringe}
(which is proved in the same way),
or directly from \eqref{qsin7}.
\end{proof}

\begin{remark}
  By a summation by parts.
  \begin{align}
    \sum_{m=2}^\infty\frac{h_{m-1}}{m(m-1)}
=    \sum_{m=2}^\infty h_{m-1}\Bigpar{\frac{1}{m-1}-\frac{1}{m}}
=    \sum_{m=1}^\infty (h_m-h_{m-1})\frac{1}{m}
=\frac{\pi^2}{6},
  \end{align}
and thus the \rhs{} of \eqref{scb1} sums to 1, and is thus a probability
distribution on \set{m\in \bbN:m\ge2}.
Consequently, \eqref{scb3} really defines a probability distribution on the
set of all full binary trees $t$.
\end{remark}

We have no explicit formula for the probabilities $\P(\CB_m=t)$ in
\eqref{scb2}--\eqref{scb4},
but for small $m$ they are easily calculated directly from the definition.

The limit \eqref{scb3} is
a limit theorem for the annealed
distribution of the random fringe tree $\CBx_n$.
We show that the corresponding quenched result holds too.

\begin{theorem}
  \label{TCBq}
For any full binary tree $t$, 
\begin{align}\label{qscb3}
\P(\CBx_n=t\mid\CBx_n) \pto
\P(\CBx_\infty=t) 
\end{align}
given in \eqref{scb3}.
Hence, 
\begin{align}\label{qscb4}
\qsin(\CB_n;t\mid\CB_n)\pto\qsin(\CBXXoo;t)
\end{align}
given in \eqref{scb4}.
Furthermore, 
\begin{align}
  \label{qscb5}
\frac{N_t(\CB_n)}{n}\pto \qsin(\CBXXoo;t).
\end{align}
\end{theorem}

\begin{proof}
  Fix $t$. We may assume $\esize{t}\ge2$, since the results are trivial
  otherwise. Let
  \begin{align}\label{qa1}
    \mux{n}:=\E[ N_t(\CB_n)].
  \end{align}
\refT{TCB} shows that
\begin{align}\label{qa2}
\frac{\mux{n}}n
\to \mu
:=\qsin(\CBXXoo;t)
.\end{align}
Fix also a sequence of constants $b_n\to\infty$ such that
$b_n=o(n)$. Consider only $n$ such that $\esize{t}<b_n<n$.

We use the terminology of \cite{Aldous-beta2,Aldous-beta3}
and call the sets of leaves used in the recursive construction in
\refSSS{SSSbeta} \emph{clades}.
Thus, every node $v\in\CB_n$ corresponds to a clade, which equals the set
of leaves in the fringe tree $\CB_n^v$. 
The construction starts at the root with the clade $[n]$,
which is recursively split (randomly) into subclades.

We now construct the tree $\cD_n$ by recursive splits 
of the clade $[n]$  as
in \refSSS{SSSbeta}, but stop at each node where the corresponding clade
has size $<b_n$. Put a mark at these nodes, and then continue the
construction of $\cD_n$.
This gives a set $v_1,\dots,v_\nu$ of marked nodes (with $\nu$ random),
where each marked node $v_i$ corresponds to a clade $Z_i$;
$Z_i$ is just the set of leaves in the fringe tree $D_n^{v_i}$ rooted at $v_i$.
By construction, $\set{Z_1,\dots,Z_\nu}$ is a partition of $[n]$,
with $|Z_i|<b_n$ for every $i$.

Conditioned on $\nu$ and the sizes $\zeta_i:=|Z_i|$ of the clades $Z_i$,
the fringe trees $T_i:=\CB_n^{v_i}$ are independent with
$T_i\eqd \CB_{\zeta_i}$. Since we assume $b_n>\esize{t}$, every fringe tree
in $\CB_n$ that is a copy of $t$ is a fringe tree of one of the $T_i$.
Hence,
\begin{align}\label{qa3}
  N_t(\CB_n)=\suminu N_t(T_i).
\end{align}
It follows that, conditioned on $\nu$ and the clade sizes
$\zeta_1,\dots,\zeta_\nu$, 
\begin{align}\label{qa4}
\E[ N_t(\CB_n)\mid \nu,\zeta_1,\dots,\zeta_\nu]
= \suminu \E[N_t(T_i)\mid\zeta_i] = \suminu \mux{\zeta_i}.
\end{align}
Similarly, for the conditional variance,
using the trivial estimate $\Var[N_t(\CB_m)]\le m^2$,
\begin{align}\label{qa5}
&\E\Bigsqpar{\Bigpar{N_t(\CB_n)-\suminu\mux{\zeta_i}}^2\mid
  \nu,\zeta_1,\dots,\zeta_\nu}
=\Var[ N_t(\CB_n)\mid \nu,\zeta_1,\dots,\zeta_\nu]
\notag\\&\qquad
= \suminu \Var[N_t(\CB_{\zeta_i})\mid\zeta_i] 
\le \suminu\zeta_i^2
\le \suminu b_n\zeta_i
=b_n n
=o(n^2).
\end{align}
Hence,
\begin{align}
  \label{qa6}
\frac{N_t(\CB_n)}{n}-\frac{1}{n}\suminu\mux{\zeta_i} 
\pto0.
\end{align}
Let $\eps>0$.
By \eqref{qa2} there exists $M=M_\eps$ such that if $n\ge M$, then
$|\mux{n}-n\mu|<n\eps$. Hence,
noting that $0\le N_t(\CB_m)\le m$ for $m\ge1$, 
and thus $\mu(m)\le m$ and $\mu\le1$,
\begin{align}\label{qa7}
  \lrabs{\suminu\mux{\zeta_i}-n\mu}
= \lrabs{\suminu\bigpar{\mux{\zeta_i}-\zeta_i\mu}}
\le
\sum_{\zeta_i\ge M}\zeta_i\eps
+\sum_{\zeta_i<M}\zeta_i
\le
n\eps
+\sum_{\zeta_i<M}M.
\end{align}
With the notation in \refL{LCBq} below, we have
$\sum_{\zeta_i<M}M = M\sum_{j=1}^{M-1}X_j$
and thus 
\refL{LCBq} implies that with high probability 
(i.e., with probability $1-o(1)$),
\begin{align}\label{qa8}
 \sum_{\zeta_i<M}M
\le \eps n
.\end{align}
It follows from \eqref{qa7} and \eqref{qa8}, since $\eps>0$ is arbitrary, that
\begin{align}\label{qa9}
\frac{1}{n}\lrabs{\suminu\mux{\zeta_i}-n\mu}\pto0,
\end{align}
which together with \eqref{qa6} yields
\begin{align}\label{qa10}
  \frac{N_t(\CB_n)}{n}-\mu\pto0.
\end{align}
This is, by \eqref{qa2},  the same as \eqref{qscb5}.

Next, \eqref{qscb5}, \eqref{tor2}, and \eqref{scb3} yield \eqref{qscb3}.
Finally, \eqref{qscb4} follows by \refL{Lfringe}
or directly from \eqref{qsin7}, using \eqref{scb3}--\eqref{scb4}.
\end{proof}

\begin{lemma}
  \label{LCBq}
Let, as in the proof of \refT{TCBq}, $\zeta_1,\dots,\zeta_\nu$ be the sizes
of the clades obtained by stopping the construction of $\CB_n$ at 
clades of size less than $b_n$, for a given sequence $b_n\to\infty$.
For $j\ge1$, let
\begin{align}\label{lcbq1}
  Y_j:=\bigabs{\set{i:\zeta_i=j}},
\end{align}
the number of these clades that have size $j$.
Then, as \ntoo, for each fixed $j$,
\begin{align}\label{lcbq2}
  Y_j/n \pto0.
\end{align}
\end{lemma}

\begin{proof}
We continue to use the notation of the proof of \refT{TCBq}.
We assume, as we may, $j< b_n < n$.

Let $L^*$ be a uniformly random leaf in $\cD_n$.
Let $\pi_j$ be the probability that $L^*$ belongs to 
one of the marked clades of size $j$.
The total number of leaves in these clades is $jY_j$, and thus
\begin{align}
  \label{qb1}
\pi_j=\E \frac{jY_j}{n}=j\frac{\E Y_j}{n}.
\end{align}

Let $X_0=n, X_1, X_2,\dots$ be the decreasing sequence of sizes of the
clades that contain $L^*$. This is a Markov chain, stopped 
when it reaches 1; it is called the \emph{harmonic descent (HD)} chain 
\cite{Aldous-beta2, Aldous-beta3,SJ386,iksanovHD}, 
and has the transition
probabilities, as a simple consequence of \eqref{beta-1},
\begin{align}\label{qb2}
  p(i,j)=\frac{1}{h_{i-1}}\cdot\frac{1}{i-j},
\qquad i>j\ge1.
\end{align}
Let
 \begin{equation} \label{ani}
a(n,i) :=  
\P\bigpar{\text{the chain started at state $n$ is ever in state $i$}}.
 \end{equation}
Since the chosen leaf $L^*$ belongs to a marked clade if and only if the HD
chain makes a jump from some $X_i\ge b_n$ to $j$ (and this can happen at
most once), we have
\begin{align}\label{qb4}
  \pi_j=\sum_{k=b_n}^n a(n,k) p(k,j)
=\sum_{k=b_n}^n a(n,k) \frac{1}{h_{k-1}}\cdot\frac{1}{k-j}.
\end{align}
A lot is known about the occupancy measure $a(n,k)$, see again
\cite{Aldous-beta2, Aldous-beta3,SJ386,iksanovHD}, 
but here we only need the trivial observation that the \rhs{} of \eqref{qb4}
is increasing in $j<b_n$.
Hence, 
\begin{align}\label{qb5}
  \pi_j\le \pi_\ell,
\qquad 1\le j\le\ell<b_n.
\end{align}
Since every leaf belongs to exactly one marked clade, we have 
$\sum_{j=1}^{b_n-1}\pi_j=1$, and thus \eqref{qb5} implies
\begin{align}
  \pi_j \le \frac{1}{b_n-j},
\qquad 1\le j<b_n.
\end{align}
Consequently, for every fixed $j$, $\pi_j\to0$ as \ntoo, 
and thus by \eqref{qb1}
\begin{align}
  \E [Y_j/n]\to0,
\end{align}
which implies \eqref{lcbq2}.
\end{proof}

As in \cite{Aldous-beta2,Aldous-beta3}, we conjecture that a central limit
theorem for $N_t(\CB_n)$ holds, as for the other random trees studied here,
but we leave this as an open problem.
(The decomposition \eqref{qa3} in the proof of \refT{TCBq} might be helpful.)

\section{The uniform full binary tree}\label{Suni}
For comparison, we give also the corresponding values for
the uniform random full binary tree $\cU_n$ with $n$ leaves.
Note that this random tree is quite different from the other trees studied
here; 
for example, as is well-known, typically $\cU_n$ has height of order
$\sqrt n$, while $\hTn$, $\eBST_n$, and $\hBST_n$ have heights of order
$\log n$, and $\cD_n$ have height of order $\log^2n$ 
\cite{Aldous-beta1}, \cite[Section~3.13]{Aldous-beta2}.

The random full binary tree $\cU_n$ can be regarded as a conditioned 
Galton--Watson tree
with critical offspring distribution $\P(\xi=0)=\P(\xi=2)=\frac12$,
see \eg{} \cite{AldousII},
and thus it follows by \citet[Lemma 9]{Aldous-fringe} 
(see also \cite{SJ285})
that the asymptotic
fringe tree distribution is the corresponding (unconditioned) 
Galton--Watson tree, which in
this case simply means that for every full binary tree $t$,
\begin{align}\label{cux}
  \P(\cUx_n=t) \to 2^{-|t|} = 2^{1-2\esize{t}}
.\end{align}
We have also the quenched version 
and asymptotic normality of the fringe tree counts:
\begin{theorem}
  \label{TU}
Let $t$ be a full binary tree with $\esize{t}=m\ge1$.
Then, as \ntoo,
\begin{align}\label{tu1}
 \P(\cUx_n=t\mid \cU_n)=\frac{N_t(\cU_n)}{|\cU_n|} \pto 2^{-|t|} = 2^{1-2m}.
\end{align}
and consequently
\begin{align}\label{cuxx}
  \qsin(\cU_n;t\mid\cU_n) 
\pto \esize{t} 2^{1-|t|}
=m 2^{2-2m}
\end{align}

Furthermore, if\/ $\esize{t}=m>1$, then
$N_t(\cU_n)$ is asymptotically normal as \ntoo:
\begin{align}
\frac{N_t(\cU_n)-n2^{-|t|}}{\sqrt{n}}
  &\dto N\xpar{0,\gammu},\label{tu2}
\end{align}
with convergence of mean and variance,
where
\begin{align}\label{tu3}
  \gammu = 2^{1-2m}-(2m-1)2^{3-4m}>0.
\end{align}
\end{theorem}
\begin{proof}
The case $m=1$ is trivial, with $N_{\TI}(\cU_n)=n$ and $|\cU_n|=2n-1$.

For $m>1$, 
we have \eqref{tu1} by \eqref{tor3} and \cite[Theorem 7.12]{SJ264},
and then \eqref{cuxx} follows by \refL{Lfringe}.
The asymptotic normality \eqref{tu2}--\eqref{tu3}
follows by \cite[Corollary 1.8]{SJ285}
\end{proof}

\section{Examples for small fringe trees}\label{Sex}

We give here some examples of exact and numerical values for the limits
of $\P(T_n^*=t)$ and $q(T_n;t)$ that describe the 
asymptotic distribution of random fringe trees and extended fringe trees
when $T_n$ is 
one of the five random trees
$\hTn$, $\eBST_n$,  $\hBST_n$, $\cD_n$, $\cU_n$.
(These limits are related by \eqref{qsin12}, or its annealed version, but we
prefer to give explicit results for both.) 
We consider only the smallest fringe trees $t$, with $\esize{t}\le4$.
Since we consider only full binary trees $T_n$, we assume that $t$ is a full
binary tree; moreover, the case $\esize{t}=1$ is trivial, since then
for any sequence $T_n$ of full binary trees
\begin{align}\label{AA}
\P(\Tx_n=\TI\mid T_n)=\frac{\esize{T_n}}{|T_n|}
=\frac{\esize{T_n}}{2\esize{T_n}-1}
\to\frac12
\end{align}
provided $\esize{T_n}\to\infty$,
and similarly $q(T;\TI)=1$ for every $T$ by the definition \eqref{qsin1}.
Hence we consider the small trees in
\refF{F:trees} 
with the notations given there, and their mirror images which we may ignore
since they give the same result, with $p$ and $q$ exchanged for $\hTn$.
For simplicity, we state only the annealed versions of the results; note that
the (stronger) quenched results too hold by the results above.
Also for simplicity, for Patricia tries, we do not show the oscillations in
that appear in the periodic case (in particular in the symmetric case
$p=q=\frac12$); we give only the constant terms coming from the constant
term $\MfE(-1)$ in \eqref{psiX} and ignore the oscillating terms there
(which is the same as averaging $\psiE$ over a period). We denote this by
$\approx$ below; note that in the aperiodic case, $\approx$ thus means $\to$.
We use freely notation from the previous sections, and omit the simple
calculations.

\begin{remark}\label{Rclade2}
  If we ignore orientations and regard the random trees as cladograms,
see \refS{SS1def}, and for Patricia tries consider only the symmetric case
$p=\frac12$, then we do not have to consider mirror images, and also 
$t_{4b}$ disappears; instead
we should multiply the results below for $t_3$ by 2, and  for $t_{4a}$ by 4
(the numbers of possible orientations).
In particular, this should be noted when comparing the results below with
\cite[Figure~7]{Aldous-beta3} where this cladogram version of our
$q(\CBXXoo;t)$ is 
given for small $t$ (with $\esize{t}\le6$).
\end{remark}

\begin{figure}[t]
\setlength{\unitlength}{0.12in}
\begin{picture}(40,6)(0,-2)
\multiput(0,0)(2,0){2}{\circle*{0.5}}
\put(1,1){\circle*{0.5}}
\drawline(0,0)(1,1)(2,0)
\put(1,-2){$t_2$}

\multiput(6,0)(2,0){3}{\circle*{0.5}}
\put(8,2){\circle*{0.5}}
\put(9,1){\circle*{0.5}}
\drawline(6,0)(8,2)(9,1)(10,0)
\drawline(8,0)(9,1)
\put(8,-2){$t_3$}

\multiput(14,0)(2,0){4}{\circle*{0.5}}
\put(17,3){\circle*{0.5}}
\put(18,2){\circle*{0.5}}
\put(19,1){\circle*{0.5}}
\drawline(14,0)(17,3)(18,2)(19,1)(20,0)
\drawline(16,0)(18,2)
\drawline(18,0)(19,1)
\put(17,-2){$t_{4a}$}

\multiput(24,0)(2,0){4}{\circle*{0.5}}
\put(27,3){\circle*{0.5}}
\put(28,2){\circle*{0.5}}
\put(27,1){\circle*{0.5}}
\drawline(24,0)(27,3)(28,2)(30,0)
\drawline(26,0)(27,1)(28,2)
\drawline(28,0)(27,1)
\put(27,-2){$t_{4b}$}

\multiput(34,0)(2,0){4}{\circle*{0.5}}
\put(37,3){\circle*{0.5}}
\put(35,1){\circle*{0.5}}
\put(39,1){\circle*{0.5}}
\drawline(34,0)(35,1)(37,3)(39,1)(40,0)
\drawline(36,0)(35,1)
\drawline(38,0)(39,1)
\put(37,-2){$t_{4c}$}

 \end{picture}
\caption{Some small full binary trees.}
\label{F:trees}
\end{figure}
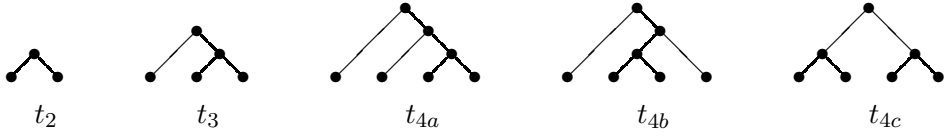

\subsection{The examples}\label{SSex}
We consider the trees $t_2,\dots,t_{4c}$ in \refF{F:trees} one by one;
for each of them we consider the five 
different random fringe trees studied above.

\begin{examplet}{t_2}\label{Et2}
For the Patricia trie $\hTn$, note first that
we have $\esize{t_2}=2$, 
$\LPL(t_2)=\RPL(t_2)=1$,  
$\nu_1(t_2)=2$, $\nu_2(t_2)=1$, and  $\nu_k(t_2)=0$, $k>2$.
Hence, 
\refL{LPT} yields
\begin{align}
  \pi_{t_2}=2pq.
\end{align}
(Which perhaps is more easily seen directly.)
\refCs{CB} and \ref{CC} then yield
\begin{align}
  \P(\hTnx=t_2) &\approx \frac{pq}{2H},
\\
\qsin(\hTn;t_2)&\approx \frac{2pq}{H}.
\end{align}
In particular, in the symmetric case $p=\frac12$, when $H=\log 2$,
\begin{align}
  \P(\hTnx=t_2) &\approx \frac{1}{8\log2}\doteq 0.1803, 
\\
\qsin(\hTn;t_2)&\approx \frac{1}{2\log2}\doteq 0.7213.  
\end{align}

For the extended BST $\eBST_n$, 
$\isize{t_2}=1$ and
\refT{TEBST} yields
\begin{align}\label{ebst2}
  \P(\eBSTx_n=t_2) &\to \frac{1}6
\doteq 0.1667,
\\\label{ebstq2}  
\qsin(\eBST_n;t_2)&\to \frac{2}3
\doteq0.6667
.\end{align}

For the compressed BST $\hBST_n$, 
\refT{TG} yields $G_{t_2}(x)=\frac{1}{8}e^{4x}-\frac12xe^{2x}-\frac18$ and
\begin{align}\label{gb2}
  \hgb_{t_2} = \tfrac1{128}e^4-\tfrac18 e^2 +\tfrac{233}{384}
\doteq 0.1097, 
\end{align}
Hence, \refT{THBST} and \refL{Lfringe} yield
\begin{align}\label{bst2}
  \P(\hBSTx_n=t_2) &\to 
\tfrac32\hgb_{t_2} 
=\tfrac3{256}e^4-\tfrac3{16} e^2 +\tfrac{233}{256}
\doteq 0.1645.   
\\\label{bstq2}  
\qsin(\hBST_n;t_2)&\to 
6\hgb_{t_2} 
=\tfrac3{64}e^4-\tfrac3{4} e^2 +\tfrac{233}{64}
\doteq 0.6581 
.\end{align}

For the critical beta-splitting random tree $\CB_n$, 
\refT{TCB} yields
\begin{align}\label{cbx2}
  \P(\CBx_n=t_2) &\to \frac{3}{2\pi^2} \doteq 0.1520,
\\\label{cbxx2}  
\qsin(\CB_n;t_2)&\to \frac{6}{\pi^2} \doteq 0.6079
.\end{align}

For the uniformly random full binary tree $\cU_n$, 
\eqref{cux} and \eqref{cuxx} yield
\begin{align}\label{cux2}
  \P(\cUx_n=t_2) &\to \frac{1}{8} = 0.125,
\\\label{cuxx2}  
\qsin(\cU_n;t_2)&\to \frac12=0.5.
\end{align}
\end{examplet}

\begin{examplet}{t_3}\label{Et3}
For $\hTn$,
we have $\esize{t_3}=3$, 
$\LPL(t_3)=2$, $\RPL(t_3)=3$, and $\nu_1(t_3)=3$,
$\nu_2(t_3)=1$, $\nu_3(t_3)=1$.
Hence, \refL{LPT} yields
\begin{align}
  \pi_{t_3}=\frac{6p^3q^2}{1-p^2-q^2}=\frac{6p^3q^2}{2pq}
=3p^2q.
\end{align}
\refCs{CB} and \ref{CC} then yield
\begin{align}
  \P(\hTnx=t_3) &\approx \frac{p^2q}{4H},
\\
\qsin(\hTn;t_3)&\approx \frac{3p^2q}{2H}.
\end{align}
In particular, in the symmetric case $p=\frac12$, when $H=\log 2$,
\begin{align}
  \P(\hTnx=t_3) &\approx \frac{1}{32\log2}\doteq 0.0451, 
\\
\qsin(\hTn;t_3)&\approx \frac{3}{16\log2}\doteq 0.2705.  
\end{align}

For $\eBST_n$, 
$\isize{t_3}=2$ and
\refT{TEBST} yields
\begin{align}\label{ebst3}
  \P(\eBSTx_n=t_3) &\to \frac{1}{24}
\doteq 0.0417,
\\\label{ebstq3}  
\qsin(\eBST_n;t_3)&\to \frac14=0.25
.\end{align}

For $\hBST_n$, \refT{TG} yields
\begin{align}\label{gb3}
\hgb_{t_3}=
\frac1{1728}e^6
-\frac 1{256}e^4
-\frac {3}{64}e^2
+\frac {2447}{6912}
\doteq 0.0279  
.\end{align}
Hence, \refT{THBST} and \refL{Lfringe} yield
\begin{align}\label{bst3}
  \P(\hBSTx_n=t_3) &\to 
\frac32\hgb_{t_3}
=
\frac1{1152}e^6
-\frac 3{512}e^4
-\frac {9}{128}e^2
+\frac {2447}{4608}
\doteq 0.0418   
,\\\label{bstq3}  
\qsin(\hBST_n;t_3)&\to
9\hgb_{t_3}
=
\frac1{192}e^6
-\frac 9{256}e^4
-\frac {27}{64}e^2
+\frac {2447}{768}
\doteq  0.2507  
.\end{align}

For 
$\CB_n$, \refT{TCB} yields,
since $\P(\CB_3=t_3)=\frac12$ (by symmetry),
\begin{align}\label{cbx3}
  \P(\CBx_n=t_3) &\to \frac{3}{8\pi^2} \doteq 0.0380,
\\\label{cbxx3}  
\qsin(\CB_n;t_3)&\to \frac{9}{4\pi^2} \doteq 0.2280
.\end{align}

For $\cU_n$, \eqref{cux} and \eqref{cuxx} yield
\begin{align}\label{cux3}
  \P(\cUx_n=t_3) &\to \frac{1}{32} = 0.03125,
\\\label{cuxx3}  
\qsin(\cU_n;t_3)&\to \frac3{16}=0.1875.
\end{align}
\end{examplet}

\begin{examplet}{t_{4a}}\label{Et4a}
For $\hTn$,
we have $\esize{t_{4a}}=4$, 
$\LPL(t_{4a})=3$, $\RPL(t_{4a})=6$, and
$\nu_1(t_{4a})=4$,
$\nu_2(t_{4a})=1$, $\nu_3(t_{4a})=1$, $\nu_4(t_{4a})=1$.
Hence, \refL{LPT} yields
\begin{align}
  \pi_{t_{4a}}=\frac{24p^6q^3}{(1-p^2-q^2)(1-p^3-q^3)}
=\frac{24p^6q^3}{2pq\cdot3pq}
=4p^4q.
\end{align}
\refCs{CB} and \ref{CC} then yield
\begin{align}
  \P(\hTnx=t_{4a}) &\approx \frac{p^4q}{6H},
\\
\qsin(\hTn;t_{4a})&\approx \frac{4p^4q}{3H}.
\end{align}
In particular, in the symmetric case $p=\frac12$, when $H=\log 2$,
\begin{align}
  \P(\hTnx=t_{4a}) &\approx \frac{1}{192\log2}\doteq 0.0075, 
\\
\qsin(\hTn;t_{4a})&\approx \frac{1}{24\log2}\doteq 0.0601. 
\end{align}

For $\eBST_n$,
$\isize{t_{4a}}=3$ and
\refT{TEBST} yields
\begin{align}\label{ebst4a}
  \P(\eBSTx_n=t_{4a}) &\to \frac{1}{120}
\doteq 0.0083,
\\\label{ebstq4a}  
\qsin(\eBST_n;t_{4a})&\to \frac{1}{15}
\doteq0.0667
.\end{align}

For $\hBST_n$, \refT{TG} yields
\begin{align}\label{gb4a}
\hgb_{t_{4a}}=
\frac1{32768}e^8
-\frac1{4608}e^6
-\frac {11}{512}e^2
+\frac {47503}{294912}
\doteq 0.0057 
.\end{align}
Hence, \refT{THBST} and \refL{Lfringe} yield
\begin{align}\label{bst4a}
  \P(\hBSTx_n=t_{4a})& \to
\frac32\hgb_{t_{4a}}=
\frac3{65536}e^8
-\frac1{3072}e^6
-\frac {33}{1024}e^2
+\frac {47503}{196608}
\doteq   0.0086  
,\\\label{hbstq4a}  
\qsin(\hBST_n;t_{4a})&\to 
12\hgb_{t_{4a}}=
\frac3{8192}e^8
-\frac1{384}e^6
-\frac {33}{128}e^2
+\frac {47503}{24576}
\doteq 0.0690  
.\end{align}

For 
$\CB_n$, 
\refT{TCB} yields,
since $\P(\CB_4=t_{4a})=\frac2{11}$ (by direct calculation),
\begin{align}\label{cbx4a}
  \P(\CBx_n=t_{4a}) &\to \frac{1}{12\pi^2} \doteq 0.0084,
\\\label{cbxx4a}  
\qsin(\CB_n;t_{4a})&\to \frac{2}{3\pi^2} \doteq 0.0675
.\end{align}

For $\cU_n$, \eqref{cux} and \eqref{cuxx} yield
\begin{align}\label{cux4a}
  \P(\cUx_n=t_{4a})& \to \frac{1}{128}\doteq 0.0078, 
\\\label{cuxx4a}  
\qsin(\cU_n;t_{4a})&\to \frac1{16} = 0.0625.
\end{align}
\end{examplet}

\begin{examplet}{t_{4b}}\label{Et4b}
For $\hTn$,
we have $\esize{t_{4b}}=4$,
$\LPL(t_{4b})=4$, $\RPL(t_{4b})=5$, and
$\nu_1(t_{4b})=4$,
$\nu_2(t_{4b})=1$, $\nu_3(t_{4b})=1$, $\nu_4(t_{4b})=1$.
Hence, \refL{LPT} yields
\begin{align}
  \pi_{t_{4b}}=\frac{24p^5q^4}{(1-p^2-q^2)(1-p^3-q^3)}
=\frac{24p^5q^4}{2pq\cdot3pq}
=4p^3q^2.
\end{align}
\refCs{CB} and \ref{CC} then yield
\begin{align}
  \P(\hTnx=t_{4b}) &\approx \frac{p^3q^2}{6H},
\\
\qsin(\hTn;t_{4b})&\approx \frac{4p^3q^2}{3H}.
\end{align}
All other results are by symmetry the same as for $t_{4a}$.  
\end{examplet}

\begin{examplet}{t_{4c}}\label{Et4c}
For $\hTn$,
we have $\esize{t_{4c}}=4$,
$\LPL(t_{4c})=\RPL(t_{4c})=4$, and
$\nu_1(t_{4c})=4$, $\nu_2(t_{4c})=2$, $\nu_{3}(t_{4c})=0$, $\nu_4(t_{4c})=1$.
Hence, \refL{LPT} yields
\begin{align}
  \pi_{t_{4c}}=\frac{24p^4q^4}{(1-p^2-q^2)^2}=\frac{24p^4q^4}{(2pq)^2}
=6p^2q^2.
\end{align}
\refCs{CB} and \ref{CC} then yield
\begin{align}
  \P(\hTnx=t_{4c}) &\approx \frac{p^2q^2}{4H},
\\
\qsin(\hTn;t_{4c})&\approx \frac{2p^2q^2}{H}.
\end{align}
In particular, in the symmetric case $p=\frac12$, when $H=\log 2$,
\begin{align}
  \P(\hTnx=t_{4c}) &\approx \frac{1}{64\log2}\doteq  0.0225, 
\\
\qsin(\hTn;t_{4c})&\approx \frac{1}{8\log2}\doteq 0.1803. 
\end{align}

For $\eBST_n$, 
$\isize{t_{4c}}=3$ and
\refT{TEBST} yields
\begin{align}\label{ebst4c}
  \P(\eBSTx_n=t_{4c}) &\to \frac{1}{60}
\doteq 0.0167,
\\\label{ebstq4c}  
\qsin(\eBST_n;t_{4c})&\to \frac{2}{15}
\doteq0.1333
.\end{align}

For $\hBST_n$, \refT{TG} yields
\begin{align}\label{gb4c}
\hgb_{t_{4c}}=
\frac1{16384}e^8
-\frac{1}{1728}e^6
+\frac{1}{1024}e^4
-\frac{1}{64}e^2
+\frac {54973}{442368}
\doteq  0.0106
.\end{align}
Hence, \refT{THBST} and \refL{Lfringe} yield
\begin{align}\label{bst4c}
  \P(\hBSTx_n=t_{4c})& \to
\frac32\hgb_{t_{4c}}=
\frac3{32768}e^8
-\frac{1}{1152}e^6
+\frac{3}{2048}e^4
-\frac{3}{128}e^2
+\frac {54973}{294912}
\doteq  0.0159,  
\\\label{hbstq4c}  
\qsin(\hBST_n;t_{4c})&\to 
12\hgb_{t_{4c}}=
\frac3{4096}e^8
-\frac{1}{144}e^6
+\frac{3}{256}e^4
-\frac{3}{16}e^2
+\frac {54973}{36864}
\doteq 0.1273  
.\end{align}

For 
$\CB_n$, 
\refT{TCB} yields,
since $\P(\CB_4=t_{4c})=\frac3{11}$ (by direct calculation),
\begin{align}\label{cbx4c}
  \P(\CBx_n=t_{4c}) &\to \frac{1}{8\pi^2} \doteq 0.0127,
\\\label{cbxx4c}  
\qsin(\CB_n;t_{4c})&\to \frac{1}{\pi^2} \doteq 0.1013
.\end{align}

For $\cU_n$, \eqref{cux} and \eqref{cuxx} yield (just as for $t_{4a}$)
\begin{align}\label{cux4c}
  \P(\cUx_n=t_{4c}) \to \frac{1}{128}\doteq 0.0078, 
\\\label{cuxx4c}  
\qsin(\cU_n;t_{4c})\to \frac1{16} = 0.0625.
\end{align}
\end{examplet}

We summarize the numerical values above in Tables \ref{tab:P} and
\ref{tab:qsin}.
In particular, note the large differences in the relative importance of
$t_{4a}$ and $t_{4c}$ for the five random full binary trees considered here:
the asymptotic ratio between the probabilities for $t_{4c}$ and $t_{4a}$ are
3 for symmetric Patricia tries (the oscillations cancel, see \refR{Rcancel}),
2 for extended BST,
$1.846\dots$ for compressed BST, 
$3/2$ for critical beta-splitting trees,
and 1 for uniform full binary trees.

In Table \ref{tab:qsin}, we include also for illustration empirical data
computed by David Aldous 
for a small set of 10 real cladograms  with a total of 995 species, 
as reported in \cite[Figure~7 and Section~5.3]{Aldous-beta3}.
(The values given here are adjusted for symmetries, see \refR{Rclade2}.)
The empirical data in \cite{Aldous-beta3} are given for all $t$ with
$\esize{t}\le6$, 
and we encourage the interested reader to extend the computations in the
examples below to all such $t$ and make comparisons. 

\begin{table}[h]
  \centering
  \begin{tabular}{l|l|l|l|l}
 & $t_2$ & $t_3$ & $t_{4a}$ & $t_{4c}$ \\
\hline
Patricia trie $\hTn$ ($p=q=\frac12$)& 0.1803 & 0.0451 & 0.0075 & 0.0225
\\    
Extended BST $\eBST_n$ & 0.1667 & 0.0417 & 0.0083 & 0.0167
\\
Compressed BST $\hBST_n$ & 0.1645 & 0.0418 & 0.0086 & 0.0159
\\
Critical beta-splitting $\cD_n$ & 0.1520 & 0.0380 & 0.0084 & 0.0127
\\
Uniform full binary tree $\cU_n$ & 0.125 & 0.0312 & 0.0078 & 0.0078
\\\hline
  \end{tabular}
  \caption{Limits or approximations of $\P(\Tx_n=t)$ for five random full
    binary trees.}
  \label{tab:P}
\end{table}

\begin{table}[h]
  \centering
  \begin{tabular}{l|l|l|l|l}
 & $t_2$ & $t_3$ & $t_{4a}$ & $t_{4c}$ \\
\hline
Patricia trie $\hTn$ ($p=q=\frac12$)& 0.7213 & 0.2705 & 0.0601 & 0.1803
\\    
Extended BST $\eBST_n$ & 0.6667 & 0.25 & 0.0667 & 0.1333
\\
Compressed BST $\hBST_n$ & 0.6581 & 0.2507 & 0.0690 & 0.1273
\\
Critical beta-splitting $\cD_n$ & 0.6079 & 0.2280 & 0.0675 & 0.1013
\\
Uniform full binary tree $\cU_n$ & 0.5 & 0.1875 & 0.0625 & 0.0625
\\\hline
real cladograms (10 samples) & 0.573 &  0.245 & 0.071 & 0.120
\\\hline
  \end{tabular}
  \caption{Limits or approximations of $\qsin(T_n;t)$ for five random full
    binary trees, together with some empirical data from real cladograms
\cite{Aldous-beta3}.
}
  \label{tab:qsin}
\end{table}

\newcommand\AAP{\emph{Adv. Appl. Probab.} }
\newcommand\JAP{\emph{J. Appl. Probab.} }
\newcommand\JAMS{\emph{J. \AMS} }
\newcommand\MAMS{\emph{Memoirs \AMS} }
\newcommand\PAMS{\emph{Proc. \AMS} }
\newcommand\TAMS{\emph{Trans. \AMS} }
\newcommand\AnnMS{\emph{Ann. Math. Statist.} }
\newcommand\AnnPr{\emph{Ann. Probab.} }
\newcommand\CPC{\emph{Combin. Probab. Comput.} }
\newcommand\JMAA{\emph{J. Math. Anal. Appl.} }
\newcommand\RSA{\emph{Random Structures Algorithms} }
\newcommand\DMTCS{\jour{Discr. Math. Theor. Comput. Sci.} }

\newcommand\AMS{Amer. Math. Soc.}
\newcommand\Springer{Springer-Verlag}
\newcommand\Wiley{Wiley}

\newcommand\vol{\textbf}
\newcommand\jour{\emph}
\newcommand\book{\emph}
\newcommand\inbook{\emph}
\def\no#1#2,{\unskip#2, no. #1,} 
\newcommand\toappear{\unskip, to appear}

\newcommand\arxiv[1]{\texttt{arXiv}:#1}
\newcommand\arXiv{\arxiv}

\def\nobibitem#1\par{}

\end{document}